\documentclass[letterpaper, 10pt, conference]{ieeeconf}
\pdfoutput=1\IEEEoverridecommandlockouts \overrideIEEEmargins
\usepackage{amsmath,amssymb,url}
\usepackage{graphicx,subfigure}
\usepackage[pdfpagemode=none,pdfstartview=FitH]{hyperref}

\newcommand{\bracket}[1]{\ensuremath{\left[ #1 \right]}}
\newcommand{\braces}[1]{\ensuremath{\left\{ #1 \right\}}}

\newcommand{\refeqn}[1]{(\ref{eqn:#1})}

\newcommand{\tr}[1]{\mathrm{tr}\ensuremath{\negthickspace\bracket{#1}}}
\newcommand{\trs}[1]{\mathrm{tr}\ensuremath{[#1]}}

\newcommand{\SO}{\ensuremath{\mathsf{SO(3)}}}
\newcommand{\T}{\ensuremath{\mathsf{T}}}

\newcommand{\so}{\ensuremath{\mathfrak{so}(3)}}

\renewcommand{\Re}{\ensuremath{\mathbb{R}}}

\newcommand{\D}{\ensuremath{\mathbf{D}}}
\newcommand{\Sph}{\ensuremath{\mathsf{S}}}


\title{\LARGE \bf 
Vision-Based Relative Attitude Formation Tracking}

\author{Tse-Huai Wu and Taeyoung Lee\authorrefmark{1}%
\thanks{Tse-Huai Wu and Taeyoung Lee, Mechanical and Aerospace Engineering, The George Washington University, Washington DC 20052. {\tt \{wu52,tylee\}@gwu.edu}}%
%
%
\thanks{\textsuperscript{\footnotesize\ensuremath{*}}This research has been supported in part by NSF under the grant CMMI-1243000 (transferred from 1029551).}
}

\newtheorem{prop}{Proposition}
\newtheorem{assump}{Assumption}

\begin{document}
\allowdisplaybreaks
\maketitle \thispagestyle{empty} \pagestyle{empty}

\begin{abstract}
Relative attitude formation control systems are developed for multiple spacecraft, based on the line-of-sight measurements between spacecraft in formation. The proposed control systems are unique in the sense that they do not require constructing the full attitudes of spacecraft and comparing them to obtain the relative attitudes indirectly. Instead, the control inputs are directly expressed in terms of line-of-sight measurements to control relative attitude formation precisely and efficiently. It is shown that the relative attitudes almost globally asymptotically track their desired relative attitudes. The desirable properties are illustrated by a numerical example.
\end{abstract}

\section{Introduction}

The coordinated control of multiple spacecraft in formation has been widely studied, as there are distinct advantages~\cite{FaxMurITAC04,JadLinITAC03}. 
However, successful operation of multiple spacecraft requires more sophisticated control systems. For example, for interferometer missions like Darwin, spacecraft in formation should maintain a specific relative position and attitude configuration precisely. Relative position control and estimation have been well-addressed, by using Carrier-phase Differential GPS for precise relative navigation~\cite{Mit04,GarChaPIAC05}.


Noticeable contributions on relative attitude control may be divided into leader-follower strategy~\cite{KanYehIJRNC02,NijRod03}, behavior-based control~\cite{BalArkITRA98,BeaLawITCST01}, and virtual structures~\cite{RenBeaPICCTA04,RenBeaPAGNCC02}. The aforementioned control systems for spacecraft attitude formation control have distinct features, but all of them are based on a common framework: the absolute attitude of each spacecraft with respect to an inertial frame is measured independently by using a local inertial measurement unit, and those measurements are transmitted to other spacecraft to determine relative attitudes by comparison.

This causes restrictions on the performance of coordinated spacecraft. First, \textit{all} of spacecraft should be equipped with possibly expensive hardware systems to determine the absolute attitude completely. This may increase the overall cost of development significantly. Second, attitude formation is \textit{indirectly} controlled by comparing the absolute attitudes of multiple spacecraft in the formation. This results in a fundamental limitation on the accuracy of attitude formation control systems, since measurement errors of multiple sensors are accumulated when determining relative attitudes. 

Vision-based sensors have been widely applied for navigation of autonomous vehicles, where low-cost optical sensors are used to extract visual features to localize a vehicle~\cite{DesKakITPAMI02}. In particular, it has been shown that line-of-sight (LOS) measurements between spacecraft in formation determine the relative attitudes completely. An extended Kalman filter for relative attitude is developed based on LOS observations~\cite{KimCraJGCD07}. The LOS measurements are also used for relative attitude determination of multiple vehicles~\cite{AndCraJGCD09,LinCraJGCD11}.

In this paper, a relative attitude formation control scheme is developed based on LOS measurements. Spacecraft in formation measure the LOS toward other spacecraft such that relative attitude between them asymptotically track a given desired relative attitude. Compared to other spacecraft attitude formation control systems, the proposed relative attitude control systems is unique in the sense that control inputs are directly expressed in terms of LOS measurements, and it does not require determining the full absolute attitude of spacecraft in formation or the full relative attitude between them. Therefore, relative attitudes are directly controlled, while utilizing the desirable features of vision-based sensors, which have higher accuracies at a relatively low cost, and they also have long-term stability, requiring no corrections in measurements as opposed to gyros. 

Compared with the preliminary work for relative attitude stabilization between two spacecraft~\cite{LeePACC12}, the control system proposed in this paper requires extensive analyses to take into full consideration of stability of time-varying systems for tracking, and the network structures between multiple spacecraft. The paper also provides stronger exponential stability, and numerical simulations with image processing.

Another distinct feature of the proposed relative attitude control system is that it is constructed on the special orthogonal group, $\SO$. Attitude control systems developed on minimal representations, such as Euler-angles, have singularities, and therefore their performance for large angle rotational maneuvers is severely limited. Quaternions do not have singularities, the ambiguity in representing attitude should be carefully resolved. By following geometric control approaches~\cite{ChaSanICSM11,BulLew05}, the proposed control system is developed in a coordinate-free fashion, and it does not have any singularity or ambiguity. 


\section{Problem Formulation}\label{sec:PF}

\subsection{Spacecraft Attitude Formation Configuration}

Consider an arbitrary number $n$ of spacecraft in formation. Each spacecraft is considered as a rigid body, and an inertial reference frame and body-fixed frames are defined. The attitude of each spacecraft is the orientation of its body-fixed frame with respect to the inertial reference frame, and it is represented by a rotation matrix in the special orthogonal group, namely
\begin{align*}
\SO =\{ R\in\Re^{3\times 3}\,|\, R^\T R =I,\quad \det{R}=1\}.
\end{align*}
Each spacecraft measures the LOS from itself toward the other assigned spacecraft. A LOS observation is represented by a unit vector in the two-sphere,  defined as
\begin{align*}
\Sph^2 = \{ s\in\Re^3\,|\, \|s\|=1\}.
\end{align*}

For $i,j\in\{1,\ldots,n\}$ and $i\neq j$, define
\begin{center}
\begin{tabular}{lp{5.8cm}}
{$R_i\in\SO$} & the absolute attitude for the $i$-th spacecraft, representing the linear transformation from the $i$-th body-fixed frame to the inertial reference frame,\\
{$s_{ij}\in\Sph^2$} & the unit vector toward the $j$-th spacecraft from the $i$-th spacecraft, represented in the inertial frame,\\
{$b_{ij}\in\Sph^2$} & the LOS direction observed from the $i$-th spacecraft to the $j$-th spacecraft, represented in the $i$-th body fixed frame,\\
{$Q_{ij}\in\SO$} & the relative attitude of the $i$-th spacecraft with respect to the $j$-th spacecraft,\\
{$Q_{ij}^d\in\SO$} & the desired relative attitude for $Q_{ij}$.
\end{tabular}
\end{center}
According to these definitions, the directions of the relative positions $s_{ij}$ in the inertial reference frame are related to the LOS observation $b_{ij}$ in the $i$-th body-fixed frame as follows:
\begin{gather}
s_{ij} = R_{i}b_{ij},\quad b_{ij} = R_{i}^\T s_{ij}\label{eqn:sbbs}.
\end{gather}
In short, $b_{ij}$ represents the LOS observation of $s_{ij}$, observed from the $i$-th body. The relative attitude is given by
\begin{align}
Q_{ij}= R_j^\T R_i,\label{eqn:Qij}
\end{align}
which represents the linear transformation of the representation of a vector from the $i$-th body fixed frame to the $j$-th body-fixed frame. Note that $Q_{ij}=Q_{ji}^\T$.

To assign a set of LOS that should be measured for each spacecraft, a graph $(\mathcal{N},\mathcal{E})$ is defined as follows. Each spacecraft is considered as a node, and the \textit{set of nodes} is given by $\mathcal{N}=\{1,\ldots,n\}$. The \textit{set of edges} $\mathcal{E}\subset\mathcal{N}\times\mathcal{N}$ is defined such that the relative attitude between the $i$-th spacecraft and the $j$-th spacecraft is directly controlled if $(i,j)\in\mathcal{E}$. It is undirected, i.e., $(i,j)\in\mathcal{E} \Leftrightarrow (j,i)\in\mathcal{E}$. For each pair of two spacecraft in the edge set, another third spacecraft is assigned by the \textit{assignment map} $\rho:\mathcal{E}\rightarrow\mathcal{N}$. As the edge set is undirected, the assignment map is symmetric, i.e., $\rho(i,j)=\rho(j,i)$.

For convenience, the edge set and the image of the assignment map are combined to form the \textit{assignment set}:
\begin{align}
\mathcal{A} =\{(i,j,k)\in \mathcal{E}\times\mathcal{N} \,|\, (i,j)\in\mathcal{E}, k=\rho(i,j)\}.
\label{eqn:A}
\end{align}
Let the \textit{measurement set} $\mathcal{L}_i$ be the set of LOS measured from the $i$-th spacecraft, and let the \textit{communication set} $\mathcal{C}_{ij}$ be the LOS transferred from the $i$-th spacecraft to the $j$-th spacecraft. 

\setlength{\unitlength}{0.1\columnwidth}
\begin{figure}
\scriptsize\selectfont
\centerline{
\begin{picture}(10,5.8)(-0.2,0)
\put(0.7,0){\includegraphics[width=0.75\columnwidth]{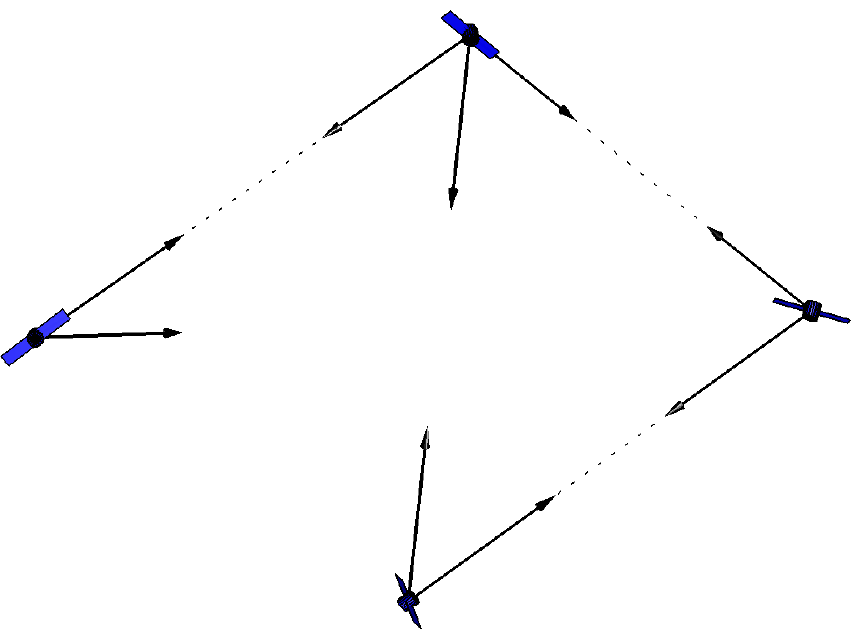}}
\put(-0.3,2.0){Spacecraft 4}
\put(4.2,5.5){Spacecraft 3}
\put(8.0,2.3){Spacecraft 2}
\put(3.5,-0.3){Spacecraft 1}
\put(1.4,3.3){$s_{43}$}
\put(1.8,2.3){$s_{42}$}
\put(3.5,4.7){$s_{34}$}
\put(4.1,4.0){$s_{31}$}
\put(5.4,4.8){$s_{32}$}
\put(7.1,3.45){$s_{23}$}
\put(6.9,1.9){$s_{21}$}
\put(3.8,1.5){$s_{13}$}
\put(5.2,0.7){$s_{12}$}
\put(6.9,0.2){\shortstack[c]{$s_{ij}=R_ib_{ij}$\\$b_{ij}=R_i^T s_{ij}$}}
\end{picture}}
\vspace*{0.2cm}
\caption{Formation of four spacecraft: the direction along the relative position of the $i$-th body from the $j$-th body is denoted by $s_{ij}$ in the inertial reference frame. The LOS observation of $s_{ij}$ with respect the $i$-th body fixed frame, namely $b_{ij}$ is obtained from \refeqn{sbbs}.}\label{fig:FS}\end{figure}

\begin{assump}\label{assump:fixeds}
The configuration of the relative positions is fixed, i.e., $\dot s_{ij}=0$ for all $i,j\in\mathcal{N}$ with $i\neq j$.
\end{assump}

\begin{assump}\label{assump:plane}
The third spacecraft assigned to each edge does not lie on the line joining two spacecraft connected by the edge, i.e., $s_{ik}\times s_{jk}\neq 0$ for every $(i,j,k)\in\mathcal{A}$.
\end{assump}

\begin{assump}\label{assump:LOS}
The measurement set of the $i$-th spacecraft is
\begin{align}
\mathcal{L}_i =\{ b_{ij},b_{ik}\in\Sph^2 \,|\, (i,j,k)\in\mathcal{A}\}.\label{eqn:Li}
\end{align}
\end{assump}

\begin{assump}\label{assump:Comm}
The communication set from the $i$-th spacecraft to the $j$-th spacecraft is given by
\begin{align}
\mathcal{C}_{ij} =
\begin{cases}
\{ b_{ij},b_{i\rho(i,j)}\} & \text{if $(i,j)\in\mathcal{E}$},\\
\emptyset & \text{otherwise}.
\end{cases}\label{eqn:Cij}
\end{align}
\end{assump}

\begin{assump}\label{assump:chain}
In the edge set, spacecraft are paired serially by daisy-chaining.
\end{assump}

The first assumption reflects the fact that this paper does not consider the translational dynamics of spacecraft, and we focus on the rotational attitude dynamics only. 
The proposed control input does not depend on the values of $s_{ij}$, but its stability analyses is based on the first assumption saying that $s_{ij}$ is fixed. The second assumption is required to determine the relative attitude between two spacecraft paired in the edge set from the assigned LOS measurements. The third assumption states that each spacecraft measures the LOS toward the paired spacecraft in the edge set, and the LOS toward the third spacecraft assigned to each pair by the assignment map. The fourth assumption implies that a spacecraft communicate only with the spacecraft paired with itself. The last assumption is made to simplify stability analysis, and the proposed relative attitude formation control system can be extended for other network topologies. 

An example for formation of four spacecraft satisfying these assumptions are illustrated at Figure \ref{fig:FS}, where
\begin{align*}
\mathcal{A} = \{ (1,2,3), (2,1,3),\, (2,3,1), (3,2,1),\, (3,4,2), (4,3,2)\}.
\end{align*}
The measurement sets and the communication sets can be determined by \refeqn{Li} and \refeqn{Cij} from $\mathcal{A}$. For example, for the third spacecraft, we have $\mathcal{L}_3 =\{ b_{31}, b_{32}, b_{34}\}$, $\mathcal{C}_{32} = \{b_{32}, b_{31}\}$, and $\mathcal{C}_{34} = \{b_{34}, b_{32}\}$.

\subsection{Spacecraft Attitude Dynamics}

The equations of motion for the attitude dynamics of each spacecraft are given by
\begin{gather}
J_i\dot \Omega_i + \Omega_i\times J_i\Omega_i = u_i,\label{eqn:Wdot}\\
\dot R_i = R_i\hat\Omega_i,\label{eqn:Rdot}
\end{gather}
where $J_i\in\Re^{3\times 3}$ is the inertia matrix of the $i$-th spacecraft, and $\Omega_i\in\Re^3$ and $u_i\in\Re^3$ are the angular velocity and the control moment of the $i$-th spacecraft, represented with respect to its body-fixed frame, respectively. 

The \textit{hat} map $\wedge :\Re^{3}\rightarrow\so$ transforms a vector in $\Re^3$ to a $3\times 3$ skew-symmetric matrix such that $\hat x y = x\times y$ for any $x,y\in\Re^3$. 
The inverse of the hat map is denoted by the \textit{vee} map $\vee:\so\rightarrow\Re^3$.
Few properties of the hat map are summarized as follows:
\begin{gather}
    \widehat{x\times y} = \hat x \hat y -\hat y \hat x = yx^T-xy^T,\label{eqn:hatxy}\\
    \tr{\hat x A}=
\frac{1}{2}\tr{\hat x (A-A^T)}=-x^T (A-A^T)^\vee,\label{eqn:hatxA}\\
R\hat x R^T = (Rx)^\wedge,\label{eqn:RxR}\\
x\cdot \hat y z = y\cdot \hat z x = z\cdot \hat x y,\label{eqn:xyz}\\
\hat x\hat y z = x\times (y\times z) = (x\cdot z) y -(x\cdot y) z,\label{eqn:STP}\\
\hat x\hat y z -\hat z\hat y x=\hat y \hat x z,\label{eqn:STP2}
\end{gather}
for any $x,y,z\in\Re^3$, $A\in\Re^{3\times 3}$, and $R\in\SO$. Throughout this paper, the 2-norm of a matrix $A$ is denoted by $\|A\|$, and the dot product of two vectors is denoted by $x \cdot y = x^Ty$. The maximum eigenvalue and the minimum eigenvalue of $J_i$ are denoted by $\lambda_{M_i}$ and $\lambda_{m_i}$, respectively.

\section{Relative Attitude Tracking Between Two Spacecraft}\label{sec:RAT2}

We first consider a simpler case of controlling the relative attitude between two spacecraft. Based on the results of this section, relative attitude formation control systems are developed later. As a concrete example, we develop a control system for the relative attitude between Spacecraft 1 and Spacecraft 2, namely $Q_{12}=R_2^\T R_1$ illustrated at Figure \ref{fig:FS}. The corresponding edge set, assignment set and measurement sets used in \textit{this section} are given by
\begin{gather}
\mathcal{E} =\{(1,2),(2,1)\},\quad \mathcal{A} = \{ (1,2,3), (2,1,3)\},\label{eqn:Ap}\\
\mathcal{L}_1=\mathcal{C}_{12}=\{b_{12},b_{13}\},\;
\mathcal{L}_2=\mathcal{C}_{21}=\{b_{21},b_{23}\}.
\end{gather}

Suppose that a desired relative attitude $Q_{12}^d(t)$ is given as a smooth function of time. It satisfies the kinematic equation:
\begin{align}
\dot Q_{12}^d = Q_{12}^d \hat\Omega_{12}^d,\label{eqn:dotQd12}
\end{align}
where $\Omega_{12}^d$ is the desired relative angular velocity. Note that these also yield $Q_{21}^d = (Q_{12}^d)^\T$ from \refeqn{Qij}, and it satisfies
\begin{align}
\dot Q_{21}^d = Q_{21}^d \hat\Omega_{21}^d, \label{eqn:dotQd21}
\end{align}
where $\Omega_{21}^d = - Q^d_{12}\Omega_{12}^d$.

The goal is to design control inputs $u_1,u_2$ in terms of the LOS measurements in $\mathcal{L}_1\cup \mathcal{L}_2$ such that $Q_{12}$ asymptotically follows $Q_{12}^d$, i.e., $Q_{12}(t)\rightarrow Q_{12}^d(t)$ as $t\rightarrow \infty$.

\subsection{Kinematics of Relative Attitudes and Lines-of-Sight}

For any $i,j\in\mathcal{N}$, the time-derivative of the relative attitude is given, from \refeqn{Rdot}, by
\begin{align}
\dot Q_{ij} & = -\hat\Omega_j R_j^\T R_i + R_j^\T R_i\hat\Omega_i = Q_{ij}\hat\Omega_i - \hat\Omega_j Q_{ij}\nonumber\\
& = Q_{ij} (\Omega_i - Q_{ij}^T \Omega_j)^\wedge \triangleq Q_{ij}\hat \Omega_{ij},\label{eqn:dotQij}
\end{align}
where the relative angular velocity $\Omega_{ij}\in\Re^3$ of the $i$-th spacecraft with respect to the $j$-th spacecraft is defined as
\begin{align}
\Omega_{ij} = \Omega_i - Q_{ij}^\T \Omega_j.\label{eqn:Wij}
\end{align}

From \refeqn{sbbs} and \refeqn{Rdot}, the time-derivative of the LOS measurement $b_{ij}$ is given by
\begin{align}
\dot b_{ij} = \dot{R}_i^\T s_{ij} = -\hat\Omega_i R_i^\T s_{ij} = b_{ij}\times\Omega_i.\label{eqn:dotbij}
\end{align}
Let $b_{ijk}\in\Re^3$ be $b_{ijk}=b_{ij}\times b_{ik}$. From \refeqn{dotbij} and \refeqn{STP}, it can be shown that
\begin{align}
\dot b_{ijk} & = (b_{ij}\times \Omega_i)\times b_{ik} + b_{ij}\times (b_{ik}\times\Omega_i)\nonumber\\
& = -(\Omega_i\cdot b_{ik})b_{ij} + (\Omega_i\cdot b_{ij})b_{ik} 
 = b_{ijk}\times \Omega_i.
\end{align}

\subsection{Relative Attitude Tracking}\label{sec:RAT}

It has been shown that four LOS measurements in $\mathcal{L}_1\cup \mathcal{L}_2$ completely determine the relative attitude $Q_{12}$ from the following constraints~\cite{LeePACC12}:
\begin{gather}
b_{12} = - Q_{12}^\T b_{21},\label{eqn:consta}\\
\frac{b_{123}}{\| b_{123} \|} = - \frac{Q_{12}^\T b_{213}}{\| b_{213} \|}
.\label{eqn:constb}
\end{gather}
These are derived from the fact that four unit vectors, namely $\{s_{12},s_{13},s_{21},s_{23}\}$ lie on the sides of a triangle composed of three spacecraft. The first constraint \refeqn{consta} states that the unit vector from Spacecraft 1 to Spacecraft 2 is exactly opposite to the unit vector from Spacecraft 2 to Spacecraft 1, i.e., $s_{12}=-s_{21}$. The second constraint \refeqn{constb} implies that the plane spanned by $s_{12}$ and $s_{13}$ should be co-planar with the plane spanned by $s_{21}$ and $s_{23}$. These geometric constraints are simply expressed with respect to the first body-fixed frame to obtain \refeqn{consta} and \refeqn{constb}. The relative attitude $Q_{12}$ is determined uniquely by the LOS measurements $\{b_{12},b_{13},b_{21},b_{23}\}$ according to \refeqn{consta} and \refeqn{constb}.

We develop a relative attitude tracking control system based on these two constraints. More explicitly, control inputs are chosen such that two constraints are satisfied when the relative attitude is equal to its desired value. As both constraints are conditions on unit vectors, controller design similar to tracking control on the two-sphere. From now on, variables related to the first constraint \refeqn{consta} (resp., the second constraint \refeqn{constb}) are denoted by the sub- or super-script $\alpha$ (resp., $\beta$).

First, configuration error functions are defined as
\begin{align}
\Psi_{12}^\alpha & = \frac{1}{2}\|b_{21}+Q_{12}^d b_{12}\|^2 = 1 +  b_{21}\cdot Q_{12}^d b_{12},\\
\Psi_{12}^\beta &  
=1 +  \frac{1}{a_{12}} b_{213}\cdot Q_{12}^d b_{123},
\end{align}
where $a_{12} = a_{21} \triangleq \| b_{213}\| \| b_{123}\|\in\Re$. Since $\|b_{ijk}\|=\|b_{ij}\times b_{ik}\| = \| R_i^\T s_{ij} \times R_i^T s_{ik}\|=\|s_{ij}\times s_{ik}\|$, the constant $a_{12}$ is fixed according to Assumption \ref{assump:fixeds}, and it is non-zero from Assumption \ref{assump:plane}. Next, we define the configuration error vectors as
\begin{alignat}{2}
e_{12}^\alpha &= (Q^d_{21} b_{21})\times b_{12},&\quad
e_{21}^\alpha &= (Q^d_{12} b_{12})\times b_{21},\label{eqn:ea}\\
e_{12}^\beta &= \frac{1}{a_{12}} (Q^d_{21} b_{213})\times b_{123},&
e_{21}^\beta &= \frac{1}{a_{21}} (Q^d_{12} b_{123})\times b_{213}.\label{eqn:eb}
\end{alignat}
As $b_{12},b_{21}$ are unit vectors, and from the definition of $a_{12},a_{21}$, we can show that $\|e^\alpha_{12}\|,\|e^\alpha_{21}\|,\|e^\beta_{12}\|,\|e^\beta_{21}\|\leq 1$.

We also define the angular velocity errors:
\begin{align}
e_{\Omega_1} = \Omega_1 -\Omega_1^d,\quad
e_{\Omega_2} = \Omega_2 -\Omega_2^d,\label{eqn:eW12}
\end{align}
where the desired absolute angular velocities $\Omega^d_1,\Omega^2_d$ are chosen such that
\begin{align}
\Omega_{12}^d(t) = \Omega_1^d(t) - Q_{21}^d(t) \Omega_2^d(t).\label{eqn:Wd12}
\end{align}
Any desired absolute angular velocities satisfying \refeqn{Wd12} can be chosen. For example, they can be selected as
\begin{align*}
\Omega_{1_d}(t) = \frac{1}{2}\Omega_{12}^d(t),\quad
\Omega_{2_d}(t) = \frac{1}{2}\Omega_{21}^d(t)=-Q^d_{12} \Omega_{1_d}(t).
\end{align*}
Using these desired angular velocities, the derivative of the desired relative attitude can be rewritten as
\begin{align}
\dot Q^d_{12} = Q^d_{12}\hat\Omega^d_1 - \hat\Omega^d_2 Q^d_{12}.
\label{eqn:Qd12dot1}
\end{align}
It is assumed that the desired angular velocities are bounded by known constants.

\begin{assump}\label{assump:Bd}
For known positive constants $B_d$,
\begin{align*}
\|\Omega^d_1(t)\|\leq B^d,\quad \|\Omega^d_2(t)\|\leq B^d,
\end{align*}
for all $t\geq 0$. 
\end{assump}

The properties of these error variables are summarized as follows.

\begin{prop}\label{prop:prop}
For positive constants $k^\alpha_{12}\neq k^\beta_{12}$, define
\begin{align}
\Psi_{12} = k^\alpha_{12} \Psi^\alpha_{12} + k^\beta_{12} \Psi^\beta_{12},\\
e_{12} = k^\alpha_{12} e^\alpha_{12} + k^\beta_{12} e^\beta_{12},\\
e_{21} = k^\alpha_{21} e^\alpha_{21} + k^\beta_{21} e^\beta_{21},
\end{align}
where $k^\alpha_{21}=k^\beta_{21}$, $k^\beta_{21}=k^\beta_{12}$. The following properties hold:
\renewcommand{\labelenumi}{(\roman{enumi})}
\begin{enumerate}\renewcommand{\itemsep}{3pt}
\item $e_{12}=-Q^d_{21}e_{21}$, and $\|e_{12}\|=\|e_{21}\|$.
\item $\frac{d}{dt} \Psi_{12}=e_{12}\cdot e_{\Omega_1}+e_{21}\cdot e_{\Omega_2}$.
\item 
$\|\dot e_{12}\| \leq (k^\alpha_{12}+k^\beta_{12})
(\|e_{\Omega_1}\|+\|e_{\Omega_2}\|)+B^d\|e_{12}\|$,\\
$\|\dot e_{21}\| \leq (k^\alpha_{12}+k^\beta_{12})
(\|e_{\Omega_1}\|+\|e_{\Omega_2}\|)+B^d\|e_{21}\|$.
\item If $\Psi_{12} \leq \psi < 2\text{min}\{k^\alpha_{12},k^\beta_{12}\}$ for a constant $\psi$, then $\Psi$ is quadratic with respect to $\|e_{12}\|$, i.e., the following inequality is satisfied:
\begin{gather}
\underline\psi_{12}\|e_{12}\|^2\leq \Psi_{12} \leq \overline\psi_{12} \|e_{12}\|^2, 
\label{eqn:PsiB}
\end{gather}
where the constants $\underline\psi_{12},\overline\psi_{12}$ are given by
\end{enumerate}
\vspace*{-0.3cm}\begin{align*}
\underline\psi_{12}&=\frac{\text{min}\{k^\alpha_{12},k^\beta_{12}\}}{2\text{max}\{(k^\alpha_{12})^2, (k^\beta_{12})^2,(k^\alpha_{12}-k^\beta_{12})^2\}+2(k^\alpha_{12}+k^\beta_{12})^2},\\
\overline\psi_{12}&=\frac{\text{min}\{k^\alpha_{12},k^\beta_{12}\}(k^\alpha_{12}+k^\beta_{12})}{\text{min}\{(k^\alpha_{12})^2,(k^\beta_{12})^2\}(2\text{min}\{k^\alpha_{12},k^\beta_{12}\}-\psi)}.
\end{align*}

\end{prop}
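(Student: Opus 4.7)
The plan is to attack each of the four properties in turn, relying heavily on the hat-map identities \refeqn{RxR}, \refeqn{xyz}, and \refeqn{STP} together with the LOS kinematics \refeqn{dotbij} and the form \refeqn{Qd12dot1} of the desired-attitude derivative. I expect parts (i)--(iii) to be essentially bookkeeping, with the quadratic bound in part (iv) being the main obstacle.

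For part (i), I would apply the rotation-cross-product identity $Q(x\times y)=(Qx)\times(Qy)$ (a consequence of \refeqn{RxR}) to the definitions \refeqn{ea} and \refeqn{eb}. For instance, $Q^d_{21}e^\alpha_{21} = Q^d_{21}((Q^d_{12}b_{12})\times b_{21}) = b_{12}\times(Q^d_{21}b_{21}) = -e^\alpha_{12}$, and the same computation handles the $\beta$ component. Since the gain pairs satisfy $k^\alpha_{21}=k^\alpha_{12}$ and $k^\beta_{21}=k^\beta_{12}$, summing with weights gives $e_{12}=-Q^d_{21}e_{21}$; the norm equality follows because $Q^d_{21}\in\SO$.

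For part (ii), I would write $c'\triangleq Q^d_{21}b_{21}$ so that $\Psi^\alpha_{12}=1+c'\cdot b_{12}$, compute $\dot c'=-(\Omega^d_1+Q^d_{21}e_{\Omega_2})\times c'$ from \refeqn{Qd12dot1} and $\dot b_{12}=-(\Omega^d_1+e_{\Omega_1})\times b_{12}$ from \refeqn{dotbij}, then differentiate the inner product. The $\Omega^d_1$-contributions cancel by the antisymmetry $x\cdot(\Omega\times y)+y\cdot(\Omega\times x)=0$ (or equivalently by \refeqn{xyz}), and what remains reassembles, using \refeqn{xyz} again, into $e_{\Omega_1}\cdot e^\alpha_{12}+e_{\Omega_2}\cdot e^\alpha_{21}$. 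The identical computation for $\Psi^\beta_{12}$ (with $b_{123},b_{213}$ in place of $b_{12},b_{21}$) gives the $\beta$-analog, and a weighted sum completes the claim.

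For part (iii), I would differentiate $e^\alpha_{12}=c'\times b_{12}$ directly using $\dot c'$ and $\dot b_{12}$, then expand the two double cross products with BAC-CAB (\refeqn{STP}). Splitting $\Omega_1=\Omega^d_1+e_{\Omega_1}$ and $\Omega_2=\Omega^d_2+e_{\Omega_2}$ everywhere, the Jacobi identity collapses the pure-desired-velocity terms into $-\Omega^d_1\times e^\alpha_{12}$, which contributes $B^d\|e^\alpha_{12}\|$ by Assumption \ref{assump:Bd}. The remaining terms are products of unit-vector inner products with $e_{\Omega_1}$ or $Q^d_{21}e_{\Omega_2}$, yielding a bound proportional to $\|e_{\Omega_1}\|+\|e_{\Omega_2}\|$. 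The same derivation applies to $\dot e^\beta_{12}$, and the triangle inequality on $\dot e_{12}=k^\alpha_{12}\dot e^\alpha_{12}+k^\beta_{12}\dot e^\beta_{12}$ produces the stated bound; the bound on $\dot e_{21}$ follows either symmetrically or by differentiating the identity $e_{12}=-Q^d_{21}e_{21}$ from part (i).

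Part (iv) is where I expect the real work. The one-variable ingredient is easy: since both $\Psi^\alpha_{12}=1+c'\cdot b_{12}$ and $\|e^\alpha_{12}\|=\|c'\times b_{12}\|$ are functions of a single angle, one has $\|e^\alpha_{12}\|^2 = \Psi^\alpha_{12}(2-\Psi^\alpha_{12})$, which under $\Psi^\alpha_{12}\leq\psi<2$ yields $\tfrac12\|e^\alpha_{12}\|^2\leq \Psi^\alpha_{12}\leq \tfrac{1}{2-\psi}\|e^\alpha_{12}\|^2$, and analogously for $\beta$. The hard part is assembling these componentwise estimates into bounds on $\|e_{12}\|^2$ versus $\Psi_{12}$, because the two error vectors $e^\alpha_{12}$ and $e^\beta_{12}$ are not independent: near the desired configuration they live respectively in the planes orthogonal to $b_{12}$ and $b_{123}$, so the cross-term $2k^\alpha_{12}k^\beta_{12}(e^\alpha_{12}\cdot e^\beta_{12})$ in $\|e_{12}\|^2$ cannot be discarded. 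I would expand this quadratic form, bound the cross-term using the geometric constraint that $b_{12}$ and $b_{123}$ are linearly independent (so the two projection subspaces intersect in a one-dimensional line), and then minimize/maximize the resulting quadratic expression in $\|e^\alpha_{12}\|,\|e^\beta_{12}\|$ over the admissible region $\Psi_{12}\leq \psi$. The assumption $k^\alpha_{12}\neq k^\beta_{12}$ together with $\psi<2\min\{k^\alpha_{12},k^\beta_{12}\}$ keeps this quadratic form non-degenerate, and the explicit extrema give the constants $\underline\psi_{12}$ and $\overline\psi_{12}$; the appearance of $(k^\alpha_{12}-k^\beta_{12})^2$ in $\underline\psi_{12}$ is precisely the footprint of this non-independence.
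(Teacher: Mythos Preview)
Your treatment of parts (i)--(iii) is correct and essentially equivalent to the paper's argument; the paper packages the same computations through the auxiliary matrix $K_{12} = k^\alpha_{12}s_{12}s_{12}^T + \frac{k^\beta_{12}}{\|s_{123}\|^2}s_{123}s_{123}^T$, writing $\Psi_{12} = k^\alpha_{12}+k^\beta_{12} - \tr{R_2^T K_{12} R_1 Q^d_{21}}$ and differentiating via \refeqn{hatxA}, but the substance is identical.

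Part (iv) is where your plan diverges, and there is a real gap in the upper-bound direction. Your componentwise strategy---bound $\Psi^\alpha$ and $\Psi^\beta$ separately by $\|e^\alpha\|^2$ and $\|e^\beta\|^2$, then control the cross-term in $\|e_{12}\|^2$---cannot deliver $\Psi_{12}\leq \overline\psi_{12}\|e_{12}\|^2$. The obstruction is that $e_{12}=k^\alpha_{12} e^\alpha_{12} + k^\beta_{12} e^\beta_{12}=0$ does \emph{not} force $e^\alpha_{12}=e^\beta_{12}=0$: both components can lie along the common direction $b_{12}\times b_{123}$ and cancel. The only reason such cancellation cannot occur with $\Psi_{12}>0$ inside the sublevel set is that $e_{12}=0$ characterizes the critical points of $\Psi_{12}$ viewed as a function on $\SO$, and the nonzero critical values are $2k^\alpha_{12}$, $2k^\beta_{12}$, $2(k^\alpha_{12}+k^\beta_{12})$, all at or above $2\min\{k^\alpha_{12},k^\beta_{12}\}$. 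Your two-scalar optimization never sees this, because $\|e^\alpha_{12}\|$, $\|e^\beta_{12}\|$, and their inner product are not free variables---they are all determined by a single rotation.

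The paper's route exploits exactly this structure. It diagonalizes $K_{12}=UGU^T$ with $G=\mathrm{diag}[k^\alpha_{12},k^\beta_{12},0]$ and $U\in\SO$, recasting $\Psi_{12}=\tr{G(I-P)}$ with $P=U^T R_1 Q^d_{21} R_2^T U\in\SO$. This is the canonical attitude error function $\tfrac12\tr{F(I-P)}$ with $F=\mathrm{diag}[2k^\alpha_{12},2k^\beta_{12},0]$, for which a known lemma (quoted from \cite{LeePACC12}) gives the quadratic bounds \refeqn{PhiB} directly in terms of the eigenvalues of $F$; the stated constants $\underline\psi_{12},\overline\psi_{12}$ are precisely those formulas evaluated at $f_1=2k^\alpha_{12}$, $f_2=2k^\beta_{12}$, $f_3=0$. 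The missing idea in your plan is this recognition that $\Psi_{12}$ is a standard $\SO$ error function in disguise; the correct proof invokes or reproves that lemma rather than attempting to glue together two one-dimensional estimates.
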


\begin{proof}
From \refeqn{ea}, \refeqn{RxR}, $e^\alpha_{12}$ is given by
\begin{align*}
e^\alpha_{12} & = (Q^d_{21} b_{21})\times b_{12}= Q^d_{21} \{ b_{21}\times Q^d_{12} b_{12}\} = -Q^d_{21} e^\alpha_{21}.
\end{align*}
Similarly, $e^\beta_{12}=-Q^d_{21} e^\beta_{21}$. These show (i).

Since $x^\T y = \trs{xy^\T}$ for any $x,y\in\Re^3$ and from \refeqn{sbbs}, the configuration error function can be written as
\begin{align}
\Psi_{12} & = k^\alpha_{12}+k^\beta_{12} 
+ k^\alpha_{12}\trs{ b_{21}b_{12}^T Q^d_{21}}
+ \frac{k^\beta_{12}}{a_{12}}\trs{b_{213}b_{123}^T Q^d_{21}}\nonumber\\
& = k^\alpha_{12}+k^\beta_{12} 
+\trs {R_2^T (k_{12}^\alpha s_{21}s_{12}^T +\frac{k^\beta_{12}}{a_{12}}s_{213}s_{123}^T)R_1Q^d_{21}}\nonumber\\
& \triangleq k^\alpha_{12}+k^\beta_{12} 
- \trs{ R_2^T K_{12} R_1 Q^d_{21}},\label{eqn:Psi0}
\end{align}
where $s_{ijk}\in\Re^3$ denotes $s_{ijk}=s_{ij}\times s_{ik}$, and $K_{12}\in\Re^{3\times 3}$. Since $s_{12}=-s_{21}$, $\frac{s_{213}}{\|s_{213}\|}=-\frac{s_{123}}{\|s_{123}\|}$, the matrix $K_{12}$ can be rewritten as
\begin{align}
K_{12}= k^\alpha_{12} s_{12} s_{12}^T +\frac{k^\beta_{12}}{\|s_{123}\|^2} s_{123}s_{123}^T.\label{eqn:K12}
\end{align}

The derivative of the configuration error function with respect to $R_1$ along the direction of $\delta R_1 = R_1\hat \eta_1$ for a vector $\eta_1\in\Re^3$ is given by
\begin{align*}
\D_{R_1}&\Psi_{12}(R_1,R_2,Q^d_{12})\cdot R_1\hat\eta_1 = 
\frac{d}{d\epsilon}\bigg|_{\epsilon=0} \Psi_{12}(R_1\exp\epsilon\hat\eta_1,R_2,Q^d_{12})\\
& = - \trs{ R_2^T K_{12} R_1\hat\eta_1 Q^d_{21}}
=-\tr{\hat\eta_1 Q_{21}^d R_2^\T K_{12} R_1}.
\end{align*}
Using \refeqn{hatxA} and \refeqn{hatxy}, this is rewritten as
\begin{align}
&\D_{R_1}\Psi_{12}(R_1,R_2,Q^d_{12})\cdot R_1\hat\eta_1\nonumber\\
& =\eta_1\cdot (
Q_{21}^d R_2^\T K_{12} R_1-R_1^\T K_{12} R_2 Q_{12}^d)^\vee\label{eqn:e12tmp}\\
& = \eta_1\cdot(-k_{12}^\alpha Q^d_{21}R_2^T s_{12} \times R_1^\T s_{12} 
-\frac{k_{12}^\beta}{a_{12}} Q^d_{21}R_2^T s_{123} \times R_1^\T s_{123}) \nonumber\\
& = \eta_1\cdot(k_{12}^\alpha Q^d_{21}b_{21} \times b_{12}
+\frac{k_{12}^\beta}{a_{12}} Q^d_{21}b_{213} \times R_1^\T b_{123})\nonumber\\
& = \eta_1\cdot e_{12}.\label{eqn:e12}
\end{align}
In short, the error vector $e_{12}$ is the left-trivialized derivative of $\Psi$ with respect to $R_1$. Similarly, we can show that
\begin{align*}
\D_{R_2}&\Psi_{12}(R_1,R_2,Q^d_{12})\cdot R_2\hat\eta_2 = \eta_2\cdot e_{21},\\
\D_{Q^d_{12}}&\Psi_{12}(R_1,R_2,Q^d_{12})\cdot Q^d_{12}\hat\eta_{12} = -\eta_{12}\cdot e_{12}.
\end{align*}
Therefore, the time-derivative of the configuration error function is given by
\begin{align*}
\dot \Psi_{12} & = e_{12}\cdot\Omega_1 +e_{21}\cdot\Omega_2 - e_{12}\cdot \Omega^d_{12}.
\end{align*}
Substituting \refeqn{Wd12} and since $Q^d_{12}e_{12}=-e_{21}$, we obtain
\begin{align*}
\dot \Psi_{12} & = e_{12}\cdot (\Omega_1-\Omega_1^d) + e_{21}\cdot (\Omega_2-\Omega_2^d),
\end{align*}
which shows (ii).

Next we show (iii). From \refeqn{e12tmp}, $e_{12}$ can be written as
\begin{align}
e_{12} = (
Q_{21}^d R_2^\T K_{12} R_1-R_1^\T K_{12} R_2 Q_{12}^d)^\vee.\label{eqn:e12vee}
\end{align}
From attitude kinematic equations \refeqn{Rdot}, \refeqn{Qd12dot1}, the time derivative of the error vector $e_{12}$ is given by
\begin{align*}
\hat{\dot e}_{12} 
& = (Q_{21}^d R_2^\T K_{12} R_1\hat\Omega_1+\hat\Omega_1R_1^\T K_{12} R_2 Q_{12}^d)\\
&\quad-Q_{21}^d (\hat\Omega_2 R_2^\T K_{12} R_1Q^d_{21}+Q^d_{12}R_1^\T K_{12} R_2\hat\Omega_2 )Q_{12}^d\\
&\quad-(\hat\Omega_{1}^dQ_{12}^{d\T} R_2^\T K_{12} R_1
+R_1^\T K_{12} R_2 Q_{12}^d\hat\Omega_{1}^d)\\
&\quad+Q_{21}^d(\hat\Omega_{2}^d R_2^\T K_{12} R_1Q_{21}^{d}
+Q_{21}^{d\T}R_1^\T K_{12} R_2 \hat\Omega_{2}^d)Q^d_{12}.
\end{align*}
Using \refeqn{eW12}, this is rewritten as
\begin{align*}
\hat{\dot e}_{12} 
& = (Q_{21}^d R_2^\T K_{12} R_1\hat e_{\Omega_1}+\hat e_{\Omega_1}R_1^\T K_{12} R_2 Q_{12}^d)\\
&\quad +(Q_{21}^d R_2^\T K_{12} R_1\hat\Omega^d_{1}+\hat\Omega^d_{1}R_1^\T K_{12} R_2 Q_{12}^d)\\
&\quad-Q_{21}^d (\hat e_{\Omega_2} R_2^\T K_{12} R_1Q^d_{21}+Q^d_{12}R_1^\T K_{12} R_2\hat e_{\Omega_2} )Q_{12}^d\\
&\quad-(\hat\Omega_{1}^dQ_{12}^{d\T} R_2^\T K_{12} R_1
+R_1^\T K_{12} R_2 Q_{12}^d\hat\Omega_{1}^d).
\end{align*}
Substituting \refeqn{K12}, and applying \refeqn{hatxy}, \refeqn{STP2}, this reduces to
\begin{align}
\dot e_{12} 
& = k^\alpha_{12}\{-Q_{21}^d b_{21}\times (\hat e_{\Omega_1} b_{12})
+ b_{12}\times Q^d_{21}(\hat e_{\Omega_2} b_{21})\}\nonumber\\
& +\frac{k^\beta_{12}}{a_{12}}\{-Q_{21}^d b_{213}\times (\hat e_{\Omega_1} b_{123})
+ b_{123}\times Q^d_{21}(\hat e_{\Omega_2} b_{213})\}\nonumber\\
& -\hat\Omega^d_{1} e_{12},\label{eqn:e12dot}
\end{align}
which shows the first inequality of (iii) from Assumption \ref{assump:Bd}. The second inequality of (iii) can be shown similarly.

Next, to show (iv), we use the following properties given in~\cite{LeePACC12}. For non-negative constants $f_1,f_2,f_3$, let $F=\text{diag}[f_1,f_2,f_3]\in\Re^{3\times 3}$, and let $P\in\SO$. Define
\begin{gather} 
\Phi=\frac{1}{2}\text{tr}[F(I-P)],\label{eqn:Phi}\\
e_P=\frac{1}{2}(FP-P^{\T}F)^\vee,\label{eqn:eP}
\end{gather}
Then, $\Phi$ is bounded by the square of the norm of $e_P$ as
\begin{gather}
\frac{h_1}{h_2+h_3}\|e_P\|^2\leq \Phi \leq\frac{h_1h_4}{h_5(h_1-\phi)}\|e_P\|^2, 
\label{eqn:PhiB}
\end{gather}
if $\Phi<\phi<h_1$ for a constant $\phi$, where $h_i$ are given by
\begin{align*} 
h_1 &= \text{min}\{f_1+f_2,~ f_2+f_3,~ f_3+f_1\}, \\
h_2 &= \text{max}\{(f_1-f_2)^2,~ (f_2-f_3)^2,~ (f_3-f_1)^2\}, \\
h_3 &= \text{max}\{(f_1+f_2)^2,~ (f_2+f_3)^2,~ (f_3+f_1)^2\}, \\
h_4 &= \text{max}\{f_1+f_2,~ f_2+f_3,~ f_3+f_1\}, \\
h_5 &= \text{min}\{(f_1+f_2)^2,~(f_2+f_3)^2,~(f_3+f_1)^2\}.
\end{align*}

We apply this property by showing that the configuration error function $\Psi_{12}$ given at \refeqn{Psi0} can be rewritten as \refeqn{Phi}. At \refeqn{K12}, the matrix $K_{12}$ can be decomposed into $K_{12}=UGU^T$, where $G$ is the diagonal matrix given by $G=\mathrm{diag}[k^\alpha_{12},k^\beta_{12},0]\in\Re^3$, and $U$ is an orthonormal matrix defined as $U=[s_{12}, \frac{s_{123}}{\|s_{123}\|}, \frac{s_{12}\times s_{123}}{\|s_{12}\times s_{123}\|} ]\in\SO$. Using the property, $\trs{AB}=\trs{BA}=\trs{B^\T A^T}=\trs{A^\T B^\T}$, the configuration error function can be rearranged as
\begin{align}
\Psi_{12} 
& =\trs{G(I- U^\T R_1 Q^d_{21} R_2^\T U)}.\label{eqn:PsiG}
\end{align}
Therefore, if we choose $F=2G$ and $P=U^TR_1Q^d_{21}R_2^T U$, we obtain $\Psi_{12}=\Phi$. Also substituting these into \refeqn{eP},
\begin{align*}
\hat e_P & = GU^TR_1Q^d_{21}R_2^T U- U^T R_2 Q^d_{12}R_1^T U G\\
&=U^T (K_{12} R_1Q^d_{21}R_2^T -  R_2 Q^d_{12}R_1^T K_{12}) U\\
&=U^T R_2 Q^d_{12}(Q^d_{21}R_2^TK_{12} R_1 -  R_1^T K_{12}R_2Q^d_{12})Q^d_{21}R_2^T U. 
\end{align*}
From this, we have $e_P=U^T R_2 Q^d_{12} e_{12}$ by \refeqn{e12vee}, \refeqn{RxR}. Therefore, $\|e_P\|=\|e_{12}\|$. Then, \refeqn{PhiB} yields (iv) with $f_1=2k^\alpha_{12}$, $f_2=2k^\beta_{12}$, $f_3=0$.

\end{proof}

Using these properties, we develop a control system to track the given desired relative attitude as follows.

\begin{prop}\label{prop:2}
Consider the attitude dynamics of spacecraft given by \refeqn{Wdot}, \refeqn{Rdot} for $i\in\{1,2\}$, with the LOS measurements specified at \refeqn{Ap}. A desired relative attitude trajectory is given by \refeqn{dotQd12}. For positive constants $k^\alpha_{12}\neq k^\beta_{12},k^\alpha_{21}=k^\alpha_{12},k^\beta_{21}=k^\beta_{12},k_{\Omega_{1}},k_{\Omega_{2}}$, control inputs are chosen as
\begin{align}
u_i = -e_{ij} -k_{\Omega_i}e_{\Omega_{i}} +\hat\Omega^d_i J_i(e_{\Omega_i}+\Omega^d_i)+J\dot\Omega^d_i,\label{eqn:ui}
\end{align}
where $(i,j)\in\mathcal{E}$. Then, the following properties hold:
\renewcommand{\labelenumi}{(\roman{enumi})}
\begin{enumerate}\renewcommand{\itemsep}{3pt}
\item There are four types of equilibrium, given by the desired equilibrium $(Q,\Omega_{12})=(Q^d_{12},\Omega^d_{12})$, and the relative configurations represented by $Q^d_{12}=R_2^T U D U^T R_1$ and $\Omega_{12}=\Omega_{12}^d$ where $D\in\{\mathrm{diag}[1,-1,-1],\mathrm{diag}[-1,1,-1],\mathrm{diag}[-1,-1,1]\}$ and $U\in\SO$ is the matrix composed of eigenvectors of $K_{12}$ given at \refeqn{K12}.
\item The desired equilibrium is almost globally exponentially stable, and a (conservative) estimate to the region of attraction is given by
\begin{gather}
\Psi_{12}(0) \leq \psi < 2\text{min}\{k^\alpha_{12},k^\beta_{12}\},\label{eqn:ROA0}\\
\sum_{i=1,2} \lambda_{M_i} \|e_{\Omega_i}(0)\|^2 \leq 2(\psi-\Psi_{12}(0)),\label{eqn:ROA1}
\end{gather}
where $\psi$ is a positive constant satisfying $\psi< 2\min\{k^\alpha_{12},k^\beta_{12}\}$, and $\lambda_{M_i}$ denotes the maximum eigenvalue of $J_i$. 
\item The undesired equilibria are unstable.
\end{enumerate}
\end{prop}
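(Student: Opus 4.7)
I would structure the argument in the classical attitude-tracking pattern: closed-loop error equations, an energy-type Lyapunov function, a cross-term modification for exponential decay, and a Chetaev-type argument for the undesired equilibria.

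First, I would derive the closed-loop error dynamics. Substituting the control \refeqn{ui} into \refeqn{Wdot}, expanding $\Omega_i=e_{\Omega_i}+\Omega_i^d$, and using $\Omega_i\times J_i\Omega_i=\hat\Omega_i J_i\Omega_i$, the gyroscopic and feed-forward terms cancel exactly so that
\begin{align*}
J_i\dot e_{\Omega_i}=-e_{ij}-k_{\Omega_i}e_{\Omega_i}+\hat\Omega_i^d J_i\Omega_i-\hat\Omega_i J_i\Omega_i=-e_{ij}-k_{\Omega_i}e_{\Omega_i}-\hat e_{\Omega_i}J_i\Omega_i.
\end{align*}
For (i), equilibria of the error system require $e_{\Omega_i}=0$ and $e_{ij}=0$ for $i\in\{1,2\}$. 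Using the expression \refeqn{e12vee} for $e_{12}$, $e_{12}=0$ is equivalent to symmetry of $Q_{21}^d R_2^T K_{12}R_1$. With the spectral decomposition $K_{12}=UGU^T$ and $G=\mathrm{diag}[k_{12}^\alpha,k_{12}^\beta,0]$ from the proof of Proposition \ref{prop:prop}, a standard SO(3) argument (comparing $PG=GP^T$ with $P\in\SO$) shows $P=U^TR_1Q^d_{21}R_2^TU$ must equal $I$ or one of the three $\pi$-rotations $\mathrm{diag}[1,-1,-1]$, $\mathrm{diag}[-1,1,-1]$, $\mathrm{diag}[-1,-1,1]$, which yields the four-equilibria classification.

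Second, for (ii) I would start with the natural Lyapunov candidate
\begin{align*}
\mathcal{V}_1=\tfrac12 e_{\Omega_1}\cdot J_1 e_{\Omega_1}+\tfrac12 e_{\Omega_2}\cdot J_2 e_{\Omega_2}+\Psi_{12}.
\end{align*}
Using Proposition \ref{prop:prop}(ii) for $\dot\Psi_{12}$ and the closed-loop $\dot e_{\Omega_i}$ above, the $-\hat e_{\Omega_i}J_i\Omega_i$ term vanishes when dotted with $e_{\Omega_i}$, and the $-e_{ij}$ terms cancel $\dot\Psi_{12}$, giving $\dot{\mathcal V}_1=-k_{\Omega_1}\|e_{\Omega_1}\|^2-k_{\Omega_2}\|e_{\Omega_2}\|^2$. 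Under the initial conditions \refeqn{ROA0}--\refeqn{ROA1}, $\Psi_{12}(t)\le\psi<2\min\{k_{12}^\alpha,k_{12}^\beta\}$ is invariant, so Proposition \ref{prop:prop}(iv) makes $\Psi_{12}$ quadratic in $\|e_{12}\|$. To upgrade from asymptotic to exponential stability I would add the cross term
\begin{align*}
\mathcal{V}=\mathcal{V}_1+\epsilon\bigl(e_{\Omega_1}\cdot J_1 e_{12}+e_{\Omega_2}\cdot J_2 e_{21}\bigr),
\end{align*}
for a sufficiently small $\epsilon>0$. Bounds from Proposition \ref{prop:prop}(i),(iii),(iv) (together with $\|e_{ij}\|\le k^\alpha_{12}+k^\beta_{12}$) show $\mathcal{V}$ is sandwiched between two positive-definite quadratic forms in $z=(\|e_{12}\|,\|e_{\Omega_1}\|,\|e_{\Omega_2}\|)^T$. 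Differentiating and applying Young's inequality to cross products yields $\dot{\mathcal V}\le -z^T W z$ for a symmetric matrix $W$ that is positive definite once $\epsilon$ is chosen small relative to $\underline\psi_{12},k_{\Omega_i},\lambda_{m_i},B^d,k_{12}^\alpha,k_{12}^\beta$. This gives exponential decay of $z$, hence of the full error state.

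Third, for (iii) I would invoke Chetaev's instability theorem at each non-trivial equilibrium. At an undesired equilibrium $P_0\in\{\mathrm{diag}[1,-1,-1],\ldots\}$, $\Psi_{12}$ attains a value $2(k_{12}^\alpha+k_{12}^\beta-g_i-g_j)$ that exceeds the desired-equilibrium value. Along the eigenvector of $P_0$ with eigenvalue $+1$, an infinitesimal rotation $R_1\mapsto R_1\exp(\epsilon\hat\eta_1)$ decreases $\Psi_{12}$, so the open set $\{\mathcal{V}<\mathcal{V}(P_0)\}$ near $P_0$ is nonempty, is bounded away from the desired equilibrium by a strict gap in $\Psi_{12}$, and $\dot{\mathcal V}\le 0$ forces solutions entering this set to escape every neighborhood of $P_0$, establishing instability. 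The main obstacle I anticipate is calibrating $\epsilon$ so that both the sandwich bound on $\mathcal{V}$ and the definiteness of $W$ hold simultaneously throughout the sublevel set defined by \refeqn{ROA0}--\refeqn{ROA1}; this is an algebraic but somewhat delicate exercise in matching the constants from Proposition \ref{prop:prop}.
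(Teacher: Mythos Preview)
Your proposal is correct and follows essentially the same route as the paper: the same energy function $\mathcal{V}_1=\mathcal{U}$, the same cross-term augmentation (the paper uses $c\,e_{ij}\cdot e_{\Omega_i}$ without the $J_i$ weight, but either choice works), the same matrix sandwich bounds via Proposition~\ref{prop:prop}(iii),(iv), and the same Chetaev argument for the undesired equilibria (the paper phrases it with $\mathcal{W}=2k^\beta_{12}-\mathcal{V}$ and invokes \cite[Theorem~3.3]{Kha96}). The only additional point the paper makes explicit is the zero-measure stable-manifold argument to upgrade local exponential stability plus instability of the saddles to \emph{almost global} exponential stability.
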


\begin{proof}
From \refeqn{Wdot}, \refeqn{ui}, and rearranging, the time-derivative of $J_ie_{\Omega_i}$ is given by
\begin{align}
J_i\dot e_{\Omega_i} & = -(e_{\Omega_i}+\Omega^d_i)\times J_i(e_{\Omega_i}+\Omega^d_i) + u_i -J_i\dot\Omega^d_i,\label{eqn:Jeidot0}\\
& =(J_ie_{\Omega_i} +J\Omega^d_i)\hat e_{\Omega_i} - e_{ij} - k_{\Omega_i} e_{\Omega_i}.\label{eqn:Jeidot}
\end{align}
The equilibrium configurations are where $e_{12}=e_{21}=e_{\Omega_{1}}=e_{\Omega_1}=0$, which corresponds to the critical points of the configuration error function given by \refeqn{PsiG}. In~\cite{BulLew05}, it has been shown that there are four critical points:
\begin{align*}
R_1 Q^d_{21}R_2^\T \in \{I, UD_1U^\T, UD_2U^\T, UD_3U^\T\},
\end{align*}
where $D_1=\mathrm{diag}[1,-1,-1]$, $D_2=\mathrm{diag}[-1,1,-1]$, $D_3=\mathrm{diag}[-1,-1,1]$. This shows (i).

Next, we show exponential stability of the desired equilibrium. A sufficient condition on the initial conditions to satisfy \refeqn{PsiB} is obtained from the following variable:
\begin{align*}
\mathcal{U} = \frac{1}{2} e_{\Omega_1}\cdot J_1e_{\Omega_1} 
+\frac{1}{2} e_{\Omega_2}\cdot J_2e_{\Omega_2}+ \Psi_{12}.
\end{align*}
From \refeqn{Jeidot} and the property (ii) of Proposition \ref{prop:prop}, the time-derivative of $\mathcal{U}$ is simply given by
\begin{align}
\dot{\mathcal{U}} 
& = -k_{\Omega_1} \|e_{\Omega_1}\|^2 -k_{\Omega_2} \|e_{\Omega_2}\|^2\leq 0,\label{eqn:Udot}
\end{align}
which implies that $\mathcal{U}(t)$ is non-increasing. For the initial conditions satisfying \refeqn{ROA0}, \refeqn{ROA1}, we have
\begin{align*}
\mathcal{U}(0) \leq \frac{1}{2}\sum_{i=1,2}\lambda_{M_i} \|e_{\Omega_i}(0)\|^2 + \Psi_{12}(0) \leq \psi.
\end{align*}
As $\mathcal{U}(0)$ is non-increasing, 
\begin{align*}
\Psi_{12}(t)\leq \mathcal{U}(t) \leq \mathcal{U} (0) \leq \psi < 2\min\{k^\alpha_{12},k^\beta_{12}\}.
\end{align*}
Therefore, $\Psi_{12}(t)\leq\psi<h_1$ for all $t\geq 0$, and the inequality \refeqn{PsiB} is satisfied. 

Let a Lyapunov function be 
\begin{align*}
\mathcal{V} = \mathcal{U} + c(e_{12}\cdot e_{\Omega_1} + e_{21}\cdot e_{\Omega_2}),
\end{align*}
for a constant $c$. Using \refeqn{PsiB}, it can be shown that
\begin{align}
\sum_{i,j\in\mathcal{E}} z_{ij}^T \underline M_{ij} z_{ij}\leq \mathcal{V}\leq
\sum_{i,j\in\mathcal{E}} z_{ij}^T \overline M_{ij} z_{ij},
\label{eqn:VB}
\end{align}
where $z_{ij}=[\|e_{ij}\|,\|e_{\Omega_i}\|]\in\Re^2$, and the matrices $\underline M_{ij}, \overline M_{ij}\in\Re^{2\times 2}$ are defined as
\begin{align}
\underline M_{ij} = \frac{1}{2}\begin{bmatrix}
\underline \psi_{ij} & -c \\
-c & \lambda_{m_i}
\end{bmatrix},\quad
\overline M_{ij} = \frac{1}{2}\begin{bmatrix}
\overline \psi_{ij} & c \\
c & \lambda_{M_i}
\end{bmatrix},\label{eqn:Bij}
\end{align}
for $(i,j)\in\mathcal{E}=\{(1,2),(2,1)\}$, where it is assumed that $\underline\psi_{ij}=\underline\psi_{ji}$, $\overline\psi_{ij}=\overline\psi_{ji}$. From \refeqn{Udot}, we obtain
\begin{align}
\dot{\mathcal{V}} = \sum_{i,j\in\mathcal{E}} -k_{\Omega_i} \|e_{\Omega_i}\|^2  + c( \dot e_{ij}\cdot e_{\Omega_i} + e_{ij}\cdot \dot e_{\Omega_i}).\label{eqn:Vdot0}
\end{align}
From \refeqn{Jeidot}, and using the fact that $\|e_{ij}\|\leq k^\alpha_{ij}+k^\beta_{ij}$, we have
\begin{align}
\dot e_{\Omega_i}\cdot e_{ij} 
&\leq - \frac{1}{\lambda_{M_i}}\|e_{ij}\|^2
+\frac{\lambda_{M_i}}{\lambda_{m_i}} (k^\alpha_{ij}+k^\beta_{ij})\|e_{\Omega_i}\|^2\nonumber\\
&\quad +\frac{\lambda_{M_i}B^d+ k_{\Omega_i}}{\lambda_{m_i}}\|e_{ij}\|\|e_{\Omega_i}\|.
\label{eqn:eWidoteij}
\end{align}
Together with the property (iii) of Proposition \ref{prop:prop}, this yields the following inequality of $\dot{\mathcal{V}}$:
\begin{align*}
\dot{\mathcal{V}} &= \sum_{i,j\in\mathcal{E}}
 -(k_{\Omega_i}-c(k^\alpha_{ij}+k^\beta_{ij})(1+\frac{\lambda_{M_i}}{\lambda_{m_i}})) \|e_{\Omega_i}\|^2 \\
&\quad - \frac{c}{\lambda_{M_i}}\|e_{ij}\|^2
+c(k^\alpha_{ij}+k^\beta_{ij})\|e_{\Omega_i}\|\|e_{\Omega_j}\|\\
&\quad+\frac{c}{\lambda_{m_i}}((\lambda_{M_i}+\lambda_{m_i})B^d+k_{\Omega_i}) \|e_{ij}\|\|e_{\Omega_i}\|.
\end{align*}
This can be rewritten as the following matrix form:
\begin{align}
\dot{\mathcal{V}} \leq -z_{12}^T W_{12} z_{12} - z_{21}^T W_{21} z_{21}-\zeta_{12}^T Y_{12} \zeta_{12},\label{eqn:VdotB}
\end{align}
where $\zeta_{12}=[\|e_{\Omega_1}\|,\|e_{\Omega_2}\|]^T\in\Re^2$ and the matrices $W_{12},W_{21},Y_{12}\in\Re^{2\times 2}$ are given by
\begin{align}
W_{ij}=\frac{1}{2}\begin{bmatrix}
\frac{c}{\lambda_{M_i}}
& -\frac{c}{\lambda_{m_i}}(\bar\lambda_i B^d+k_{\Omega_i})\\
-\frac{c}{\lambda_{m_i}}(\bar\lambda_iB^d+k_{\Omega_i})
& k_{\Omega_i}-c\bar k_{ij}(1+\frac{\lambda_{M_i}}{\lambda_{m_i}})
\end{bmatrix},\label{eqn:Wij}\\
Y_{12}= \frac{1}{2}\begin{bmatrix}
k_{\Omega_1}-c\bar k_{12}(1+\frac{\lambda_{M_1}}{\lambda_{m_1}}) &
-2c\bar k_{12} \\
-2c\bar k_{12} 
& k_{\Omega_2}-c\bar k_{12}(1+\frac{\lambda_{M_2}}{\lambda_{m_2}}) 
\end{bmatrix}.\label{eqn:Y12}
\end{align}
for $(i,j)\in\mathcal{E}$. Here, $\bar\lambda_i$ denotes $\bar\lambda_i=\lambda_{M_i}+\lambda_{m_i}$, and $\bar k_{ij}$ denotes $\bar k_{ij}=k^\alpha_{ij}+k^\beta_{ij}$. It can be shown that if the constant $c$ is sufficiently small, then all of the matrices $\underline{M}_{ij},\overline M_{ij}, W_{ij}$ for $(i,j)\in\mathcal{E}$, and $Y_{12}$ at \refeqn{VB} and \refeqn{VdotB} are positive definite. For example, $\underline{M}_{ij}$ is positive definite if $c<\sqrt{\underline{\psi}_{ij}\lambda_{m_i}}$. This shows that $\mathcal{V}$ is positive definite and decrescent, and $\dot{\mathcal{V}}$ is negative definite. Therefore, the desired equilibrium is exponentially stable. 

Next, we show (iii). At the first type of undesired equilibria given by $R_1Q^d_{21}R_2^T = UD_1U^\T$ and $e_{\Omega_1}=e_{\Omega_2}=0$, the value of the Lyapunov function becomes $\mathcal{V}= 2k^\beta_{12}$. Define
\begin{align*}
\mathcal{W} = 2k^\beta_{12} -\mathcal{V}. 
\end{align*}
Then, $\mathcal{W}=0$ at the undesired equilibrium, and we have
\begin{align*}
-\sum_{(i,j)\in\mathcal{E}}\{\frac{\lambda_{M_i}}{2}\|e_{\Omega_i}\|^2 +c\|e_{ij}\|\|e_{\Omega_i}\|\} + (2k^\beta_{12} - \Psi) \leq \mathcal{W}.
\end{align*}
Due to the continuity of $\Psi$, we can choose $R_1$ and $R_2$ arbitrary close to the undesired equilibrium such that $(2k^\beta_2 - \Psi)>0$. Therefore, if $\|e_{\Omega_i}\|$ is sufficiently small, we obtain $\mathcal{W}>0$. Therefore, at any arbitrarily small neighborhood of the undesired equilibrium, there exists a set in which $\mathcal{W}>0$, and we have $\dot{\mathcal{W}}=-\dot{\mathcal{V}} >0$ from \refeqn{VdotB}. Therefore, the undesired equilibrium is unstable~\cite[Theorem 3.3]{Kha96}. The instability of other types of equilibrium can be shown similarly. This shows (iii).

The region of attraction to the desired equilibrium excludes the union of stable manifolds to the unstable equilibria. But, the union of stable manifolds has less dimension than the tangent bundle of the configuration manifold. Therefore, the measure of the stable manifolds to the unstable equilibria is zero. This implies the desired equilibrium is almost globally exponentially stable~\cite{ChaSanICSM11}, which shows (ii).
\end{proof}

This states that almost all solutions of the proposed control system, excluding a class of solutions starting from a specific set that has a zero-measure, asymptotically track given the
desired relative attitude. As the control inputs are expressed in terms of observations, in addition to angular velocities, and the full relative attitude does not have to be constructed at each time. These results can be considered as a generalization of the preliminary work in \cite{LeePACC12}, but it is a nontrivial extension as the several properties of the error variables should be considered to show a stronger exponential stability for tracking problems.

\section{Relative Attitude Formation Tracking}

The relative attitude control system between two spacecraft developed in the previous section can be used as a building block for a relative attitude formation control system for multiple spacecraft. In this section, we generalize it for daisy-chained relative attitude formation control network.

\subsection{Relative Attitude Tracking Between Three Spacecraft}

We first consider relative attitude formation tracking between three spacecraft, given by Spacecraft 1, 2, and 3, illustrated at Figure \ref{fig:FS}. The corresponding edge set and the assignment set used in this subsection are given by
\begin{gather}
\mathcal{E} =\{(1,2),(2,1),(2,3),(3,2)\},\\
\mathcal{A} =\{(1,2,3),(2,1,3),(2,3,1),(3,2,1)\}.\label{eqn:App}
\end{gather}
For given relative attitude commands, $Q^d_{12}(t),Q^d_{23}(t)$, the goal is to design control inputs such that $Q_{12}(t)\rightarrow Q^d_{12}(t)$ and $Q_{23}(t)\rightarrow Q^d_{23}(t)$ as $t\rightarrow \infty$.

The definition of error variables and their properties developed in the previous section for two spacecraft are readily generalized to any $(i,j,k)\in\mathcal{A}$ in this section. For example, the kinematic equation for the desired relative attitude $Q^d_{23}$ is obtained from \refeqn{dotQd12} as
\begin{align*}
\dot Q^d_{23} = Q^d_{23}\hat\Omega^d_{23},
\end{align*}
where $\Omega^d_{23}$ is the desired relative angular velocity. Other configuration error functions and error vectors between Spacecraft 2 and Spacecraft 3 are defined similarly. 

The desired absolute angular velocities for each spacecraft, namely $\Omega^d_1$, $\Omega^d_2$, and $\Omega^d_{3}$ should be properly defined. For the given $\Omega^d_{12}$, $\Omega^d_{23}$, they can be arbitrarily chosen such that
\begin{align}
\Omega_{12}^d(t) = \Omega_1^d(t) - Q_{21}^d(t) \Omega_2^d(t),\\
\Omega_{23}^d(t) = \Omega_2^d(t) - Q_{32}^d(t) \Omega_3^d(t).
\end{align}
For example, they can be chosen as $\Omega^d_1= \Omega^d_{12}$, $\Omega^d_2= 0$, $\Omega^d_3 = -Q^d_{23}\Omega^d_{23}$. Assumption \ref{assump:Bd} is considered to be satisfied such that each of the desired angular velocity is bounded by a known constant $B^d$.

\begin{prop}\label{prop:RAT3}
Consider the attitude dynamics of spacecraft given by \refeqn{Wdot}, \refeqn{Rdot} for $i\in\{1,2,3\}$, with the LOS measurements specified at \refeqn{App}. Desired relative attitudes are given by $Q^d_{12}(t)$, $Q^d_{23}(t)$. For positive constants $k^\alpha_{ij},k^\beta_{ij},k_{\Omega_{i}}$ with $k^\alpha_{ij}\neq k^\beta_{ij}$, $k^\alpha_{ij}=k^\alpha_{ji}$, $k^\beta_{ij}=k^\beta_{ji}$ for $(i,j)\in\mathcal{E}$, 
\begin{align}
u_1 &= -e_{12} -k_{\Omega_1}e_{\Omega_{1}} +\hat\Omega^d_1 J_1(e_{\Omega_1}+\Omega^d_1)+J\dot\Omega^d_1,\label{eqn:u1}\\
u_2 &= -\frac{1}{2}(e_{21}+e_{23}) -k_{\Omega_2}e_{\Omega_{2}} +\hat\Omega^d_2 J_2(e_{\Omega_2}+\Omega^d_2)+J\dot\Omega^d_2,\label{eqn:u2}\\
u_3 &= -e_{32} -k_{\Omega_3}e_{\Omega_{3}} +\hat\Omega^d_3 J_3(e_{\Omega_3}+\Omega^d_3)+J\dot\Omega^d_3\label{eqn:u1},
\end{align}
Then, the desired relative attitude configuration is almost globally exponentially stable, and a (conservative) estimate to the region of attraction is given by
\begin{gather}
\Psi_{12}(0)+\Psi_{23}(0)
 \leq \psi < 2\min\{k^\alpha_{12},k^\beta_{12},k^\alpha_{23},k^\beta_{23}\},\label{eqn:ROA2}\\
\begin{aligned}
 \lambda_{M_1} \|e_{\Omega_1}(0)\|^2
&+2\lambda_{M_2} \|e_{\Omega_2}(0)\|^2
+\lambda_{M_3} \|e_{\Omega_3}(0)\|^2\\
& \leq 2(\psi-\Psi_{12}(0)-\Psi_{23}(0)),
\end{aligned}\label{eqn:ROA3}
\end{gather}
where $\psi$ is a positive constant satisfying $\psi< 2\min\{k^\alpha_{12},k^\beta_{12},k^\alpha_{23},k^\beta_{23}\}$.

\end{prop}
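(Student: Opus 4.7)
The plan is to extend the single-edge Lyapunov analysis of \refprop{2} to the two-edge daisy chain by counting the middle spacecraft twice in the energy function. I start with the candidate
\begin{align*}
\mathcal{U} = \tfrac{1}{2} e_{\Omega_1}\!\cdot\! J_1 e_{\Omega_1} + e_{\Omega_2}\!\cdot\! J_2 e_{\Omega_2} + \tfrac{1}{2} e_{\Omega_3}\!\cdot\! J_3 e_{\Omega_3} + \Psi_{12} + \Psi_{23},
\end{align*}
where the doubled weight on $e_{\Omega_2}\cdot J_2 e_{\Omega_2}$ is chosen precisely so that the $\tfrac{1}{2}$ multiplier appearing in $u_2$ cancels both cross terms $e_{21}\cdot e_{\Omega_2}$ from $\dot\Psi_{12}$ and $e_{23}\cdot e_{\Omega_2}$ from $\dot\Psi_{23}$ that arise via property (ii) of \refprop{prop}. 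A direct calculation using the analog of \refeqn{Jeidot} together with the control laws yields
\begin{align*}
\dot{\mathcal{U}} = -k_{\Omega_1}\|e_{\Omega_1}\|^2 - 2k_{\Omega_2}\|e_{\Omega_2}\|^2 - k_{\Omega_3}\|e_{\Omega_3}\|^2 \leq 0.
\end{align*}
The hypotheses \refeqn{ROA2}--\refeqn{ROA3} are rigged exactly so that $\mathcal{U}(0)\leq\psi$, whence $\Psi_{12}(t)+\Psi_{23}(t)\leq\psi$ for all $t$. Since both summands are nonnegative, each stays below $2\min\{k^\alpha_{ij},k^\beta_{ij}\}$ on its own edge, so property (iv) of \refprop{prop} provides the quadratic bounds $\underline\psi_{ij}\|e_{ij}\|^2\leq\Psi_{ij}\leq\overline\psi_{ij}\|e_{ij}\|^2$ for all time.

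Next I form the full Lyapunov function
\begin{align*}
\mathcal{V} = \mathcal{U} + c\!\!\sum_{(i,j)\in\mathcal{E}}\!\! e_{ij}\cdot e_{\Omega_i}
\end{align*}
for a small constant $c>0$, with $\mathcal{E}=\{(1,2),(2,1),(2,3),(3,2)\}$. Mimicking the argument around \refeqn{VB}--\refeqn{Bij}, $\mathcal{V}$ is sandwiched between sums of quadratic forms in $z_{ij}=[\|e_{ij}\|,\|e_{\Omega_i}\|]^T$, where the two edges incident to spacecraft 2 each contribute a block involving $\|e_{\Omega_2}\|$, consistent with the doubled mass coefficient in $\mathcal{U}$. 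The derivatives of the cross terms $e_{ij}\cdot e_{\Omega_i}$ are bounded edge-by-edge using property (iii) of \refprop{prop} and the per-edge estimate \refeqn{eWidoteij}, except that the equation of motion for $e_{\Omega_2}$ is now forced by both $e_{21}$ and $e_{23}$, producing additional $\|e_{\Omega_2}\|\|e_{\Omega_1}\|$ and $\|e_{\Omega_2}\|\|e_{\Omega_3}\|$ couplings that I collect into a three-variable coupling matrix $Y$ acting on $\zeta=[\|e_{\Omega_1}\|,\|e_{\Omega_2}\|,\|e_{\Omega_3}\|]^T$. This leads to a schematic bound of the form
\begin{align*}
\dot{\mathcal{V}} \leq -\!\!\sum_{(i,j)\in\mathcal{E}}\!\! z_{ij}^T W_{ij} z_{ij} - \zeta^T Y \zeta,
\end{align*}
where each $W_{ij}$ has the structure \refeqn{Wij} and $Y$ is the three-variable analog of \refeqn{Y12}.

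The main obstacle is verifying that the doubled weighting on spacecraft 2 and the halved forcing in $u_2$ remain compatible across all these bounding estimates, so that every $W_{ij}$, the matrix $Y$, and the lower-bound matrices from \refeqn{Bij} are simultaneously positive definite. Each such matrix has diagonal dominated by the gains $k_{\Omega_i}$ and $c/\lambda_{M_i}$ against $O(c)$ off-diagonal entries, so choosing $c$ sufficiently small suffices, just as in the proof of \refprop{2}. Exponential decay of the combined state then follows by the usual comparison, giving exponential stability of the desired equilibrium. The almost-global conclusion follows exactly as in the final paragraph of the proof of \refprop{2}: the non-desired critical points are products of the three undesired configurations from part (i) of \refprop{2} on each edge, their stable manifolds have lower dimension than the tangent bundle of the configuration manifold, and hence the union of those stable manifolds has measure zero.
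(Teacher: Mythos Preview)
Your overall strategy matches the paper's exactly: the same energy $\mathcal{U}$ with the doubled weight on spacecraft~2, the same Lyapunov function $\mathcal{V}=\mathcal{U}+c\sum_{(i,j)\in\mathcal{E}} e_{ij}\cdot e_{\Omega_i}$, and the same small-$c$ argument. The gap is in how you organize the $\dot{\mathcal{V}}$ bound at the middle node. You write that the new feature is that ``the equation of motion for $e_{\Omega_2}$ is now forced by both $e_{21}$ and $e_{23}$, producing additional $\|e_{\Omega_2}\|\|e_{\Omega_1}\|$ and $\|e_{\Omega_2}\|\|e_{\Omega_3}\|$ couplings'' to be absorbed into $Y$. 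But those angular-velocity couplings already appear in the two-spacecraft case via property~(iii) of \refprop{prop}. The genuinely new term comes from computing $c\,e_{21}\cdot\dot e_{\Omega_2}$ and $c\,e_{23}\cdot\dot e_{\Omega_2}$ \emph{separately}: since $J_2\dot e_{\Omega_2}$ contains $-\tfrac12(e_{21}+e_{23})$, each inner product picks up a cross piece $-\tfrac{c}{2}e_{2j}\cdot J_2^{-1}e_{2l}$ with $j\neq l$, so the per-edge estimate \refeqn{eWidoteij} does not apply to spacecraft~2 as written. Summing, one obtains a sign-indefinite contribution bounded only as $\tfrac{c}{\lambda_{m_2}}\|e_{21}\|\|e_{23}\|$. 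This is a \emph{configuration-error} coupling, not an angular-velocity one, so it fits neither your $W_{ij}$ blocks nor your matrix $Y$; and since it scales linearly in $c$ just like the good $-\tfrac{c}{2\lambda_{M_2}}\|e_{2j}\|^2$ terms, taking $c$ small does not help. For generic $J_2$ with $\lambda_{m_2}<\lambda_{M_2}$ the resulting $2\times2$ form in $(\|e_{21}\|,\|e_{23}\|)$ is indefinite, and borrowing from the end-node blocks $W_{12},W_{32}$ does not close the gap for arbitrary inertias $J_1,J_3$.

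The paper avoids this by not splitting the two edges at spacecraft~2: it keeps the cross term as $c\,e_{\Omega_2}\cdot(e_{21}+e_{23})$, bounds $\dot e_{\Omega_2}\cdot(e_{21}+e_{23})$ and $\|\dot e_{21}+\dot e_{23}\|$ in one shot, so that the forcing contributes $-\tfrac{c}{2}(e_{21}+e_{23})\cdot J_2^{-1}(e_{21}+e_{23})\leq -\tfrac{c}{2\lambda_{M_2}}\|e_{21}+e_{23}\|^2$ with the correct sign. The $\dot{\mathcal{V}}$ bound then uses $z_{213}=[\|e_{21}+e_{23}\|,\|e_{\Omega_2}\|]^\T$ with its own block $W_{213}$, together with two $2\times2$ coupling matrices $Z_{21},Z_{23}$ in $(\|e_{\Omega_2}\|,\|e_{\Omega_1}\|)$ and $(\|e_{\Omega_2}\|,\|e_{\Omega_3}\|)$ rather than a single $3\times3$ matrix $Y$. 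With this grouping every matrix is positive definite for small $c$ without any condition on the inertias, and the remainder of your argument (exponential decay, instability of the fifteen undesired critical configurations, almost-global conclusion) then goes through.
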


\begin{proof}
The time-derivative of $J_1e_1$ and $J_3e_3$ are given by \refeqn{Jeidot}, and the time-derivative of $J_2e_2$ is given by
\begin{align}
J_2\dot e_{\Omega_2} 
& =(J_2e_{\Omega_2} +J\Omega^d_2)\hat e_{\Omega_2} - \frac{1}{2}(e_{21}+e_{23}) - k_{\Omega_2} e_{\Omega_2}.\label{eqn:Je2dot}
\end{align}
Define 
\begin{align*}
\mathcal{U} & = \frac{1}{2}e_{\Omega_1}\cdot J_1 e_{\Omega_1}
+e_{\Omega_2}\cdot J_2 e_{\Omega_2}
+\frac{1}{2}e_{\Omega_3}\cdot J_3 e_{\Omega_3}\nonumber\\
&\quad +\Psi_{12}+\Psi_{23}.
\end{align*}
From \refeqn{Jeidot}, \refeqn{Je2dot}, we have
\begin{align*}
\dot{\mathcal{U}} = -k_{\Omega_1}\|e_{\Omega_1}\|^2
-2k_{\Omega_3}\|e_{\Omega_2}\|^2
-k_{\Omega_3}\|e_{\Omega_3}\|^2,
\end{align*}
which implies that $\mathcal{U}(t)$ is non-increasing. For the initial conditions satisfying \refeqn{ROA2} and \refeqn{ROA3}, we have $\mathcal{U}(0)\leq \psi$. Therefore,
\begin{align*}
\Psi_{12}(t)+\Psi_{23}(t)\leq \mathcal{U}(t) \leq \mathcal{U}(0) \leq \psi < 2\min\{k^\alpha_{12},k^\beta_{12},k^\alpha_{23},k^\beta_{23}\}.
\end{align*}
Therefore, the inequality \refeqn{PsiB} holds for both of $\Psi_{12}$ and $\Psi_{23}$. 

Let a Lyapunov function be
\begin{align}
\mathcal{V} = \mathcal{U} + c e_{\Omega_1}\cdot e_{12} + ce_{\Omega_2}\cdot (e_{21}+e_{23})
+ce_{\Omega_3}\cdot e_{32}.
\end{align}
From \refeqn{PsiB}, we can show that this Lyapunov function satisfies the inequality given by \refeqn{VB}. The time-derivative of the Lyapunov function is given by \refeqn{Vdot0}. 

For $(i,j)\in\{(1,2),(3,2)\}$, the upper bound of $\dot e_{\Omega_{i}}\cdot e_{ij}$ is given by \refeqn{eWidoteij}. From \refeqn{Je2dot}, the upper bound of $\dot e_{\Omega_{2}}\cdot (e_{21}+e_{23})$ is given by
\begin{align*}
&\dot e_{\Omega_2}\cdot (e_{21}+e_{23})
\leq - \frac{1}{\lambda_{M_2}}\|e_{21}+e_{23}\|^2\nonumber\\
&+\frac{\lambda_{M_2}}{\lambda_{m_2}} (\overline k_{21}+\overline k_{23})\|e_{\Omega_2}\|^2
 +\frac{\lambda_{M_2}B^d+ k_{\Omega_2}}{\lambda_{m_2}}\|e_{21}+e_{23}\|\|e_{\Omega_2}\|.
\end{align*}
The upper bounds of $\|\dot e_{32}\|$ and $\|\dot e_{12}\|$ are given by the property (iii) of Proposition \ref{prop:prop}. Additionally, using \refeqn{e12dot}, we can show that
\begin{align*}
\|\dot e_{21}+\dot e_{23}\|& \leq (\overline k_{21}+\overline k_{23})
(\|e_{\Omega_1}\|+2\|e_{\Omega_2}\|+\|e_{\Omega_3}\|)\nonumber\\
&\quad +B^d\| e_{21}+e_{23}\|.
\end{align*}
Applying these bounds to the expression of $\dot{\mathcal{V}}$ and rearranging, we obtain
This can be written as a matrix form as
\begin{align}
\dot{\mathcal{V}} & \leq -z_{12}^T W_{12} z_{12}
- z_{213}^T W_{213} z_{213} - z_{32}^T W_{32} z_{32}
\nonumber\\
&\quad
 -\zeta_{21}^T Z_{21} \zeta_{21}
- \zeta_{23}^T Z_{23} \zeta_{23},\label{eqn:Vdot2}
\end{align}
where the matrix $W_{12},W_{23}\in\Re^{2\times 2}$ are given as \refeqn{Wij}, and $z_{213}=[\|e_{21}+e_{23}\|, \|e_{\Omega_2}\|]\in\Re^2$. The matrices $W_{213}$, $Z_{21}$, $Z_{23}\in\Re^{2\times 2}$ are defined as
\begin{align*}
W_{213}&=\frac{1}{2}\begin{bmatrix}
\frac{c}{\lambda_{M_2}}
& -\frac{c}{\lambda_{m_2}}(\bar\lambda_2 B^d+k_{\Omega_2})\\
-\frac{c}{\lambda_{m_2}}(\bar\lambda_2B^d+k_{\Omega_2})
& 2k_{\Omega_2}-c\bar k_{213}(2+\frac{\lambda_{M_2}}{\lambda_{m_2}}) 
\end{bmatrix},\\
Z_{21}&= \frac{1}{4}\begin{bmatrix}
2k_{\Omega_2}-c\bar k_{213}(2+\frac{\lambda_{M_2}}{\lambda_{m_2}}) 
& -c(6\bar k_{12}+2\bar k_{23}) \\
-c(6\bar k_{12}+2\bar k_{23})
& k_{\Omega_1}-c\bar k_{13}(1+\frac{\lambda_{M_1}}{\lambda_{m_1}})
\end{bmatrix},\\
Z_{23}&= \frac{1}{4}\begin{bmatrix}
2k_{\Omega_2}-c\bar k_{213}(2+\frac{\lambda_{M_2}}{\lambda_{m_2}}) 
& -c(6\bar k_{23}+2\bar k_{12}) \\
-c(6\bar k_{23}+2\bar k_{12})
& k_{\Omega_3}-c\bar k_{13}(1+\frac{\lambda_{M_3}}{\lambda_{m_3}})
\end{bmatrix},
\end{align*}
where $\bar k_{213}=\bar k_{12}+\bar k_{23}$. It can be shown that if the constant $c$ is sufficient small, all of matrices at \refeqn{VB} and \refeqn{Vdot2} are positive definite, which implies 
\begin{align*}
\dot{\mathcal{V}} & \leq -\lambda_{m}(W_{12}) (\|e_{12}\|^2+\|e_{\Omega_1}\|^2)\\
&\quad -\lambda_{m}(W_{213})(\|e_{21}+e_{23}\|^2+\|e_{\Omega_2}\|^2)\\
&\quad -\lambda_{m}(W_{32})(\|e_{31}\|^2+\|e_{\Omega_3}\|^2)\\
&\leq -\lambda_{m}(W_{12}) \braces{\frac{1}{2}(\|e_{12}\|^2+\|e_{21}\|^2)+\|e_{\Omega_1}\|^2}\\
&\quad-\lambda_{m}(W_{213})\|e_{\Omega_2}\|^2\\
&\quad -\lambda_{m}(W_{32}) \braces{\frac{1}{2}(\|e_{32}\|^2+\|e_{23}\|^2)+\|e_{\Omega_3}\|^2},
\end{align*}
where $\lambda_m(\cdot)$ denotes the minimum eigenvalue of a matrix, and we use the fact that $\|e_{12}\|=\|e_{21}\|$, $\|e_{23}\|=\|e_{32}\|$. Therefore, the desired equilibrium is exponentially stable. 

To show \textit{almost} exponential stability, it is required that the fifteen types of the undesired equilibria, corresponding to the critical points of $\Psi_{12}$ and $\Psi_{23}$, are unstable. This is similar to the proof of the property (iii) of Proposition \ref{prop:2}, and it is omitted.
\end{proof}

The control inputs for Spacecraft 1 and Spacecraft 3 at the both ends of graph are identical to \refeqn{ui} at Proposition \ref{prop:2}. The control input for Spacecraft 2, which are paired with both of Spacecraft 1 and 3, is also similar to \refeqn{ui} except that the configuration error vectors for Spacecraft 2, namely $e_{21}$ and $e_{23}$ are averaged. These ideas can be generalized to relative attitude formation tracking between an arbitrary number of spacecraft as follows.

\subsection{Relative Attitude Formation Tracking Between $n$ Spacecraft}

Consider a formation of $n$ spacecraft, i.e., $\mathcal{N}=\{1,\ldots,n\}$. According to Assumption \ref{assump:chain}, spacecraft are paired serially in the edge set. For convenience, it is assumed that spacecraft are numbered such that the edge set is given by
\begin{align}
\mathcal{E}=\{(1,2),\ldots,(n-1,n),(2,1),\ldots (n,n-1)\}.\label{eqn:En}
\end{align}
The assignment set is given by \refeqn{A} for an arbitrary assignment map satisfying Assumption \ref{assump:plane}. The desired relative attitudes $Q^d_{ij}$ for $(i,j)\in\mathcal{E}$ are prescribed. The definition of error variables and their properties developed in Section \ref{sec:RAT2} are generalized to any $(i,j,k)\in\mathcal{A}$. The desired absolute angular velocities $\Omega^d_i$ for $i\in\mathcal{N}$ are chosen such that 
\begin{align}
\Omega^d_{ij}(t) = \Omega^d_i(t) - Q^d_{ji}(t) \Omega^d_{j}(t)\quad\text{for $(i,j)\in\mathcal{E}$}. \label{eqn:Wid}
\end{align}

\begin{prop}
Consider the attitude dynamics of spacecraft given by \refeqn{Wdot}, \refeqn{Rdot} for $i\in\{1,\ldots,n\}$, with the LOS measurements specified by \refeqn{En},\refeqn{A}. Desired relative attitudes are given by $Q^d_{ij}(t)$ for $(i,j)\in\mathcal{E}$. For positive constants $k^\alpha_{ij},k^\beta_{ij},k_{\Omega_{i}}$ with $k^\alpha_{ij}\neq k^\beta_{ij}$, $k^\alpha_{ij}=k^\alpha_{ji}$, $k^\beta_{ij}=k^\beta_{ji}$, the control inputs chosen as
\begin{align}
u_1 &= -e_{12} -k_{\Omega_1}e_{\Omega_{1}} +\hat\Omega^d_1 J_1(e_{\Omega_1}+\Omega^d_1)+J\dot\Omega^d_1,\label{eqn:u1n}\\
u_p &= -\frac{1}{2}(e_{p,p-1}+e_{p,p+1}) -k_{\Omega_p}e_{\Omega_{p}} +\hat\Omega^d_p J_p(e_{\Omega_p}+\Omega^d_p)\nonumber\\
&\quad +J\dot\Omega^d_p\quad\text{for $p\in\{2,\ldots,n-1\}$,}\label{eqn:up}\\
u_n &= -e_{n,n-1} -k_{\Omega_n}e_{\Omega_{n}} +\hat\Omega^d_n J_n(e_{\Omega_n}+\Omega^d_n)+J\dot\Omega^d_n\label{eqn:un},
\end{align}
Then, the desired relative attitude configuration is almost globally exponentially stable, and a (conservative) estimate to the region of attraction is given by
\begin{gather*}
\sum_{i=1}^{n-1}\Psi_{i,i+1}(0)
 \leq \psi < 2\min_{1\leq i\leq n-1}\{k^\alpha_{i,i+1},k^\beta_{i,i+1}\},\\
\begin{split}
 \lambda_{M_1} \|e_{\Omega_1}(0)\|^2
&+2\sum_{i=2}^{n-1}\lambda_{M_i} \|e_{\Omega_i}(0)\|^2
+\lambda_{M_n} \|e_{\Omega_n}(0)\|^2\\
& \leq 2(\psi-\sum_{i=1}^{n-1}\Psi_{i,i+1}(0)),
\end{split}
\end{gather*}
where $\psi$ is a positive constant satisfying $\psi< 2\min_{1\leq i\leq n-1}\{k^\alpha_{i,i+1},k^\beta_{i,i+1}\}$.
\end{prop}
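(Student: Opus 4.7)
The plan is to generalize the Lyapunov construction of Proposition \ref{prop:RAT3} along the daisy chain, weighting each interior spacecraft $p\in\{2,\ldots,n-1\}$ by a factor of two in the kinetic-energy term so that each configuration error $\Psi_{i,i+1}$ is charged exactly once to each of its two incident spacecraft. First I would define
\begin{align*}
\mathcal{U} = \tfrac{1}{2} e_{\Omega_1}\cdot J_1 e_{\Omega_1} + \sum_{p=2}^{n-1} e_{\Omega_p}\cdot J_p e_{\Omega_p} + \tfrac{1}{2} e_{\Omega_n}\cdot J_n e_{\Omega_n} + \sum_{i=1}^{n-1}\Psi_{i,i+1}.
\end{align*}
By property (ii) of Proposition \ref{prop:prop}, $\dot\Psi_{i,i+1} = e_{i,i+1}\cdot e_{\Omega_i} + e_{i+1,i}\cdot e_{\Omega_{i+1}}$; summing over $i$ and collecting the coefficient of each $e_{\Omega_p}$ gives $e_{12}$ at $p=1$, the averaged quantity $e_{p,p-1}+e_{p,p+1}$ at each interior $p$, and $e_{n,n-1}$ at $p=n$. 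Substituting the control laws \refeqn{u1n}--\refeqn{un} into the expressions \refeqn{Jeidot} and \refeqn{Je2dot} for $J_p\dot e_{\Omega_p}$, the cross terms cancel exactly against $\sum_i\dot\Psi_{i,i+1}$, leaving
\begin{align*}
\dot{\mathcal{U}} = -k_{\Omega_1}\|e_{\Omega_1}\|^2 - 2\sum_{p=2}^{n-1} k_{\Omega_p}\|e_{\Omega_p}\|^2 - k_{\Omega_n}\|e_{\Omega_n}\|^2 \le 0.
\end{align*}
The initial-condition bounds in the statement then yield $\mathcal{U}(0)\le\psi$, hence each $\Psi_{i,i+1}(t)\le\psi<2\min\{k^\alpha_{i,i+1},k^\beta_{i,i+1}\}$ for all $t\ge 0$, so the quadratic bound (iv) of Proposition \ref{prop:prop} is available uniformly in time for every edge.

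Next I would form
\begin{align*}
\mathcal{V} = \mathcal{U} + c\, e_{\Omega_1}\cdot e_{12} + c\sum_{p=2}^{n-1} e_{\Omega_p}\cdot(e_{p,p-1}+e_{p,p+1}) + c\, e_{\Omega_n}\cdot e_{n,n-1},
\end{align*}
and establish a two-sided quadratic sandwich in $(\|e_{i,i+1}\|,\|e_{\Omega_p}\|)$ as in \refeqn{VB} using property (iv) of Proposition \ref{prop:prop}. Differentiating $\mathcal{V}$ and applying the endpoint estimate \refeqn{eWidoteij} at $p=1,n$ together with its weighted analogue (used for Spacecraft~2 in Proposition \ref{prop:RAT3}) at every interior $p$, and bounding $\|\dot e_{p,p-1}+\dot e_{p,p+1}\|$ via \refeqn{e12dot} and property (iii) of Proposition \ref{prop:prop}, the upper bound on $\dot{\mathcal{V}}$ splits into a finite sum of quadratic forms in two-dimensional vectors generalizing $z_{12}, z_{213}, z_{32}, \zeta_{21}, \zeta_{23}$ of Proposition \ref{prop:RAT3}. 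Each resulting $2\times 2$ matrix has the same structural form as its counterpart there, with diagonal entries of size $O(c)$ and $k_{\Omega_p}-O(c)$ and off-diagonal entries of size $O(c)$; the nearest-neighbor couplings in $\zeta$-blocks involve only $(\|e_{\Omega_p}\|,\|e_{\Omega_{p\pm 1}}\|)$ because the chain has degree at most two. Redistributing $\|e_{p,p-1}+e_{p,p+1}\|^2$ onto the individual $\|e_{i,i+1}\|^2$ via $\|e_{i,j}\|=\|e_{j,i}\|$ then produces a strict quadratic bound on $\dot{\mathcal{V}}$ and hence exponential stability of the desired equilibrium.

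The undesired critical points are the configurations obtained by placing each $R_i Q^d_{ji}R_j^\T$ at one of the three non-identity critical points of the single-edge error, with all $e_{\Omega_p}=0$; their instability follows edge by edge by the argument used for Proposition \ref{prop:2}(iii), and since the union of their stable manifolds has measure zero, almost global exponential stability follows as in Proposition \ref{prop:2}. The main obstacle is the simultaneous positive-definiteness requirement: a single $c$ must render all of the $O(n)$ matrices appearing in the bound for $\dot{\mathcal{V}}$ positive definite at once, and the off-diagonal entries now scale with constants that depend on all edge gains $k^\alpha_{i,i+1},k^\beta_{i,i+1}$ along the chain. Because each matrix is a fixed $2\times 2$ small-$c$ perturbation of a positive diagonal and only nearest-neighbor couplings are present, the existence of an admissible $c$ reduces to an explicit Schur-complement / diagonal-dominance inequality that can be checked in closed form in terms of $k^\alpha_{i,i+1}$, $k^\beta_{i,i+1}$, $k_{\Omega_p}$, $\lambda_{M_p}$, $\lambda_{m_p}$, and $B^d$, mirroring the verification performed in the proofs of Proposition \ref{prop:2} and Proposition \ref{prop:RAT3}.
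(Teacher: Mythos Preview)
Your proposal is correct and follows essentially the same approach the paper indicates: the paper states only that the proof is ``a straightforward, but tedious extension of the proof of Proposition~\ref{prop:RAT3}'' and omits the details, and your outline supplies exactly that extension---the weighted energy $\mathcal{U}$, the cross-term Lyapunov function $\mathcal{V}$, the $2\times 2$ block decomposition of $\dot{\mathcal{V}}$, and the small-$c$ positive-definiteness argument---in the expected way.
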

The proof of this proposition is a straightforward, but tedious extension of the proof of Proposition \ref{prop:RAT3}. Due to page limit, the detailed proof is omitted, but numerical results for multiple spacecraft are provided in the next section.

\section{Numerical Example}

\setlength{\unitlength}{0.1\columnwidth}
\begin{figure}
\scriptsize\selectfont
\centerline{
\begin{picture}(6,6)(0,0)
\put(0,0){\includegraphics[width=0.60\columnwidth]{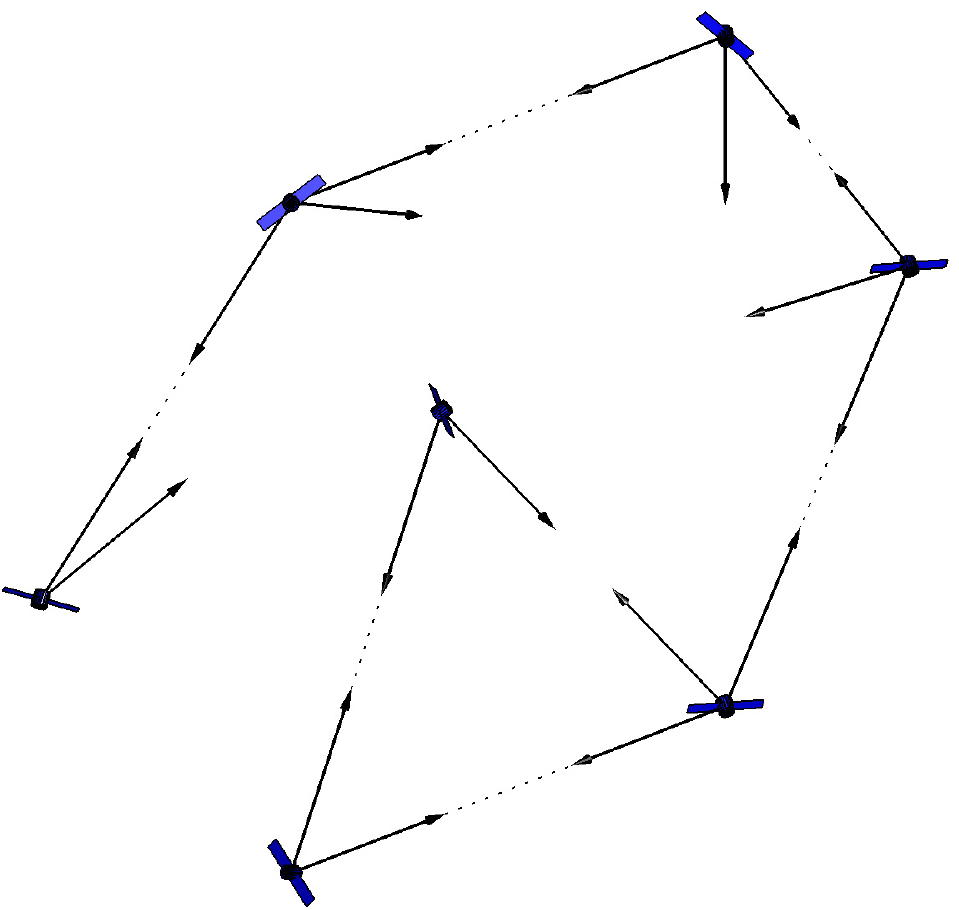}}
\put(0,1.6){S1}
\put(1.5,4.6){S2}
\put(4.6,5.6){S3}
\put(5.8,3.7){S4}
\put(4.8,1.0){S5}
\put(1.8,-0.2){S6}
\put(2.6,3.4){S7}
\end{picture}}
\vspace*{0.2cm}
\caption{Relative attitude formation tracking for 7 spacecraft: the lines-of-sight measured by each spacecraft are denoted by arrows, and the dotted line between two spacecraft implies that they are paired at the edge set. }\label{fig:SFCN}
\end{figure}

Consider the formation of seven spacecraft illustrated at Figure \ref{fig:SFCN}. The corresponding edge is given by \refeqn{En} with $n=7$, and the assignment set is
\begin{align*}
\mathcal{A}=\{&(1,2,3),(2,1,3),\,(2,3,4),(3,2,4),\,(3,4,5),(4,3,5),\\
&(4,5,7),(5,4,7),\, (5,6,7),(6,5,7),\, (6,7,5),(7,6,5)\}.
\end{align*}
The desired relative attitudes for $Q^d_{34}$ and $Q^d_{45}$ are given in terms of 3-2-1 Euler angles as $Q^d_{34}(t)=Q^d_{34}(\alpha(t),\beta(t),\gamma(t))$, $Q^d_{45}(t)=Q^d_{45}(\phi(t),\theta(t),\psi(t))$, where
\begin{gather*}
\alpha(t)=\sin 0.5t,\;\beta(t)=0.1,\;\gamma(t)=\cos t,\\
\phi(t)=0,\;\theta(t)=-0.1+\cos 0.2t,\;\psi(t)=0.5\sin 2t,
\end{gather*}
and $Q_{12}^d(t)=Q^d_{23}(t)=Q_{56}^d(t)=I$, $Q^d_{67}(t)=(Q^d_{45}(t))^\T$. It is chosen that $\Omega^d_4(t)=0$, and other desired absolute angular velocities are selected to satisfy \refeqn{Wid}.

The initial attitudes for Spacecraft 3 and 6 are chosen as $R_3(0)=\exp (0.999\pi\hat e_1)$ and $R_6(0)=\exp (0.990\pi\hat e_2)$, where $e_1=[1,0,0]^\T,e_2=[0,1,0]^\T\in\Re^3$. The initial attitudes for other spacecraft are chosen as the identity matrix. The resulting initial errors for the relative attitudes $Q_{23}$ and $Q_{67}$ are $0.99\pi\,\mathrm{rad}=179.82^\circ$. The initial angular velocity is chosen as zero for every spacecraft.

The inertia matrix is identical, i.e., $J_i=\mathrm{diag}[3,2,1]\,\mathrm{kgm^2}$ for all $i\in\mathcal{N}$. Controller gains are chosen as $k_{\Omega_i}=7$, $k^\alpha_{ij}=25$, and $k^\beta_{ij}=25.1$ for any $(i,j)\in\mathcal{E}$.



Tracking errors for relative attitudes and control inputs are shown at Figure \ref{fig:err}, where the relative attitude error vectors are defined as $e_{Q_{ij}}=\frac{1}{2}((Q_{ij}^d)^\T Q_{ij}-Q_{ij}^\T Q_{ij}^d)^\vee\in\Re^3$. These illustrate good convergence rates. 

\begin{figure}
\centerline{
	\subfigure[Relative attitude error functions $\Psi_{12},\Psi_{23},\ldots,\Psi_{67}$]{\includegraphics[width=0.47\columnwidth]{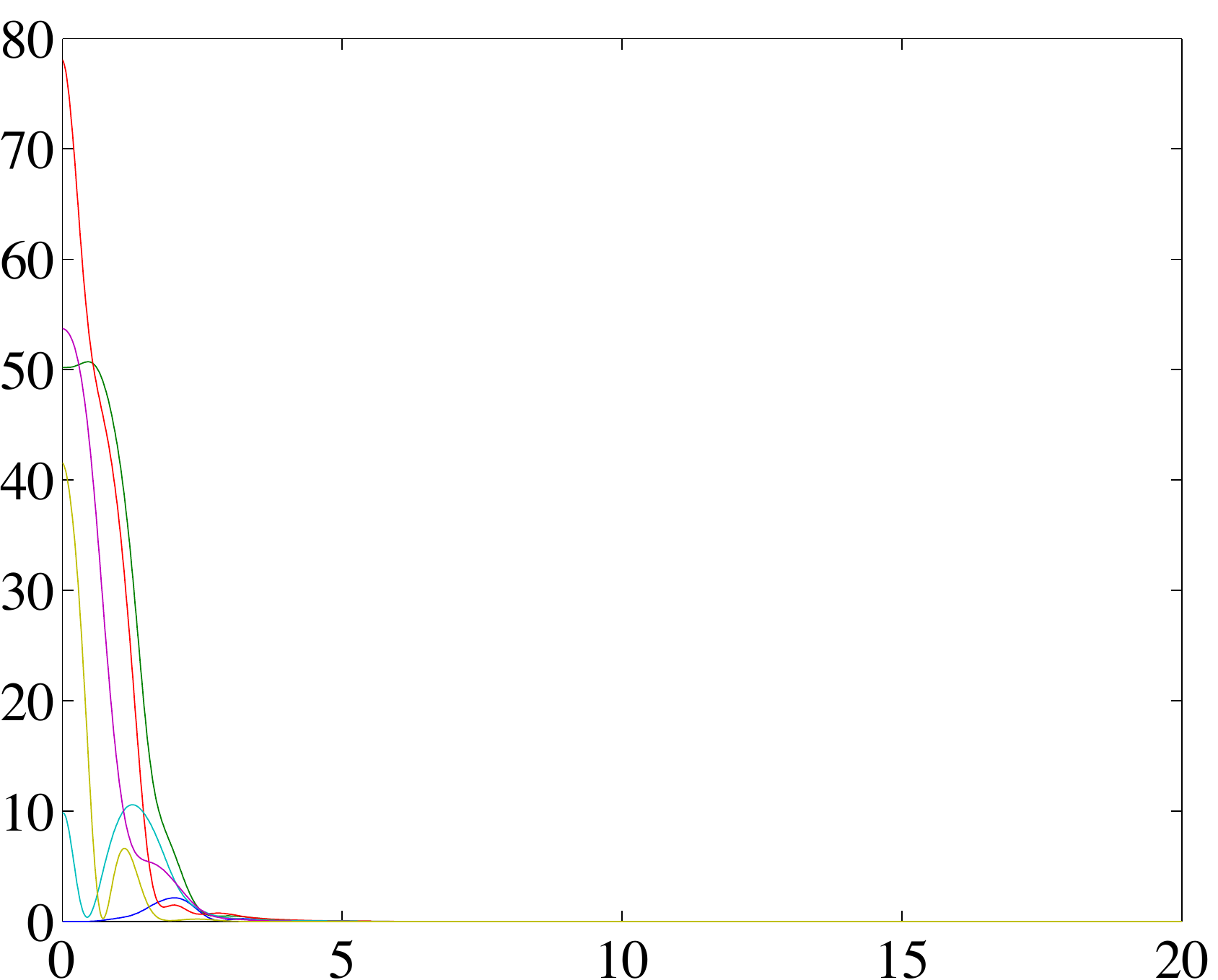}}
	\hspace*{0.03\columnwidth}
	\subfigure[Relative attitude error vectors $e_{Q_{12}},e_{Q_{23}},\ldots,e_{Q_{67}}$]{\includegraphics[width=0.47\columnwidth]{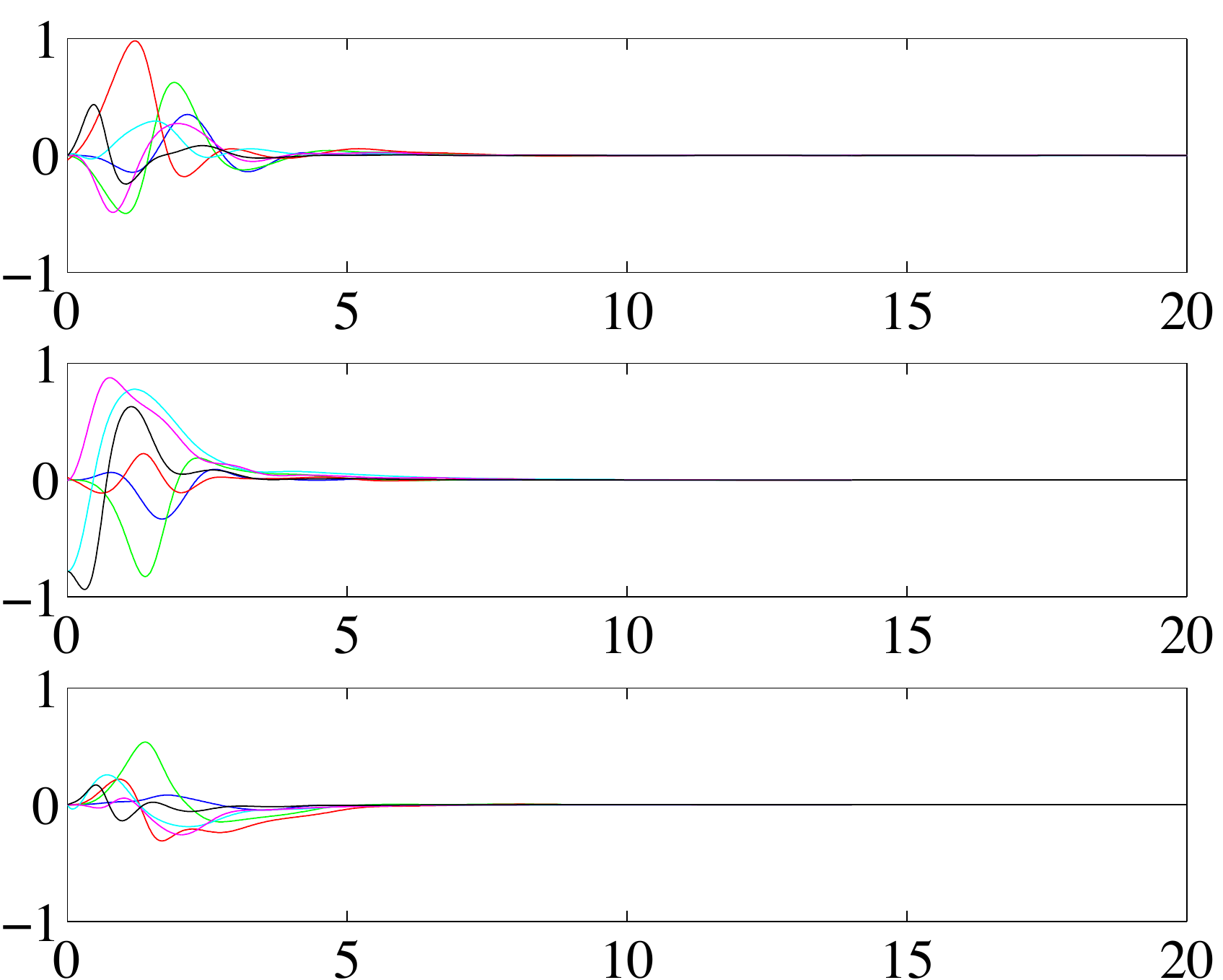}}
}
\centerline{
	\subfigure[Relative angular velocity error $e_{\Omega_1},\ldots,e_{\Omega_7}$]{\includegraphics[width=0.47\columnwidth]{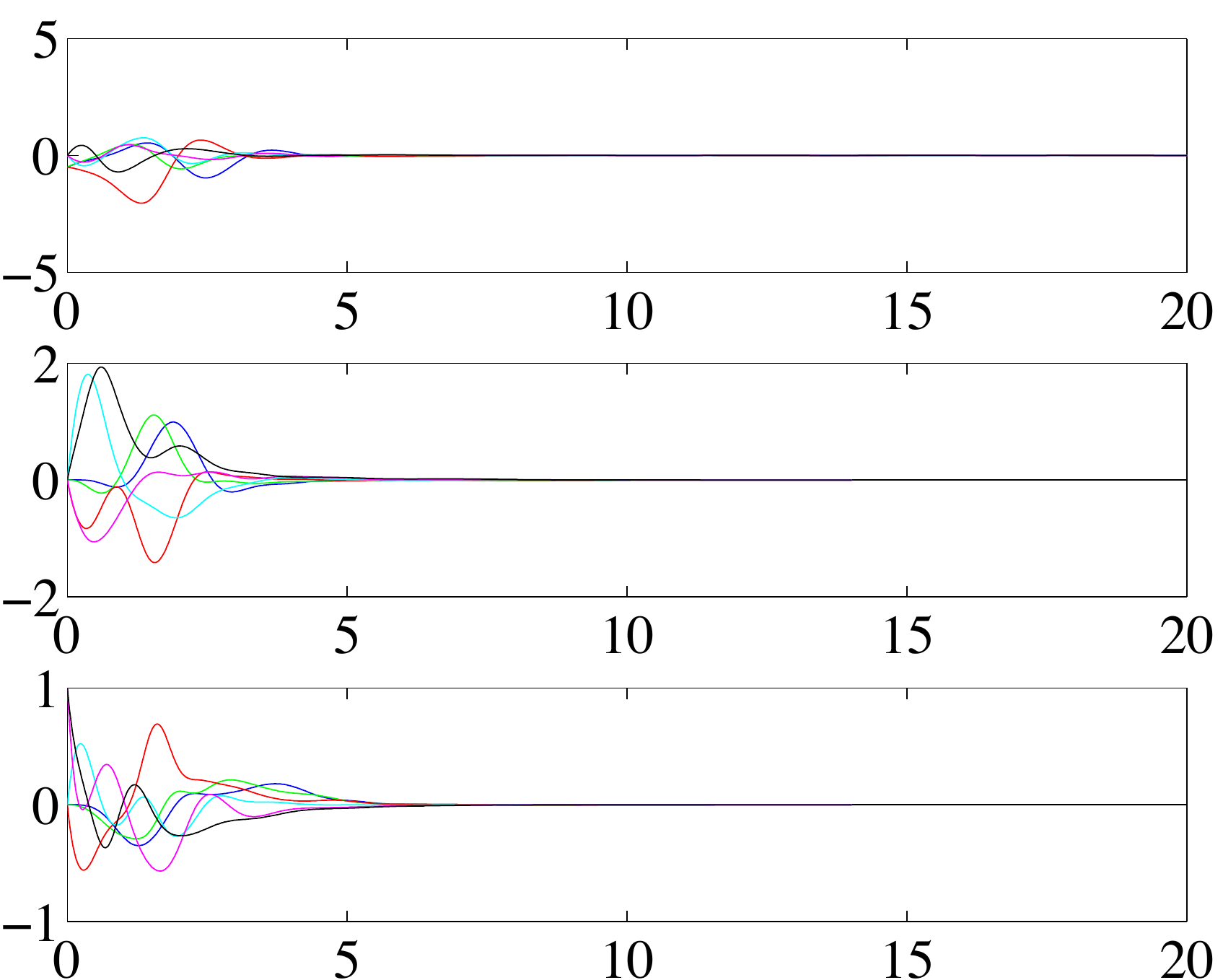}}
	\hspace*{0.03\columnwidth}
	\subfigure[Control moments $u_{1},\ldots,u_7$]{\includegraphics[width=0.47\columnwidth]{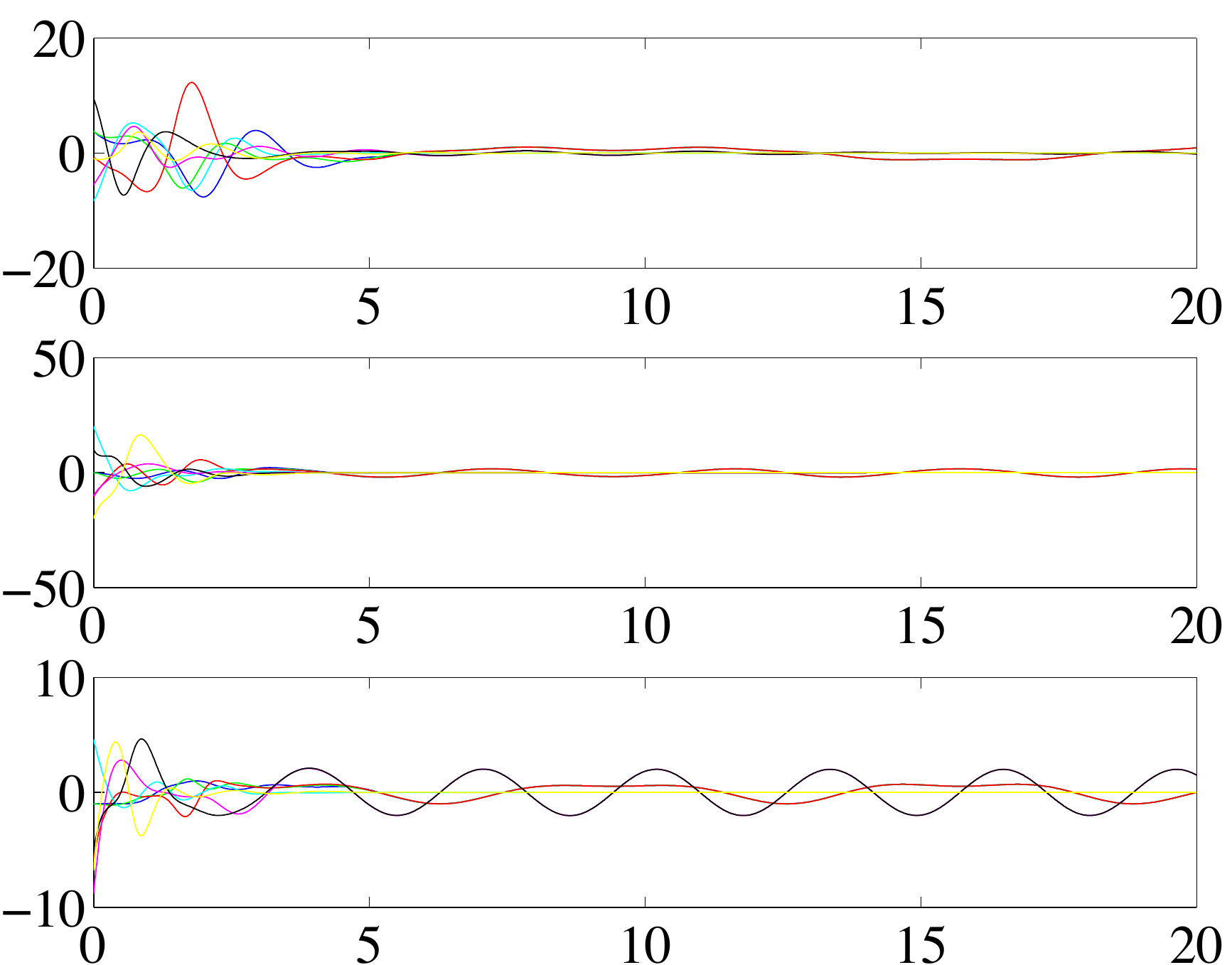}}
}
\caption{Numerical results for seven spacecraft in formation (blue, green, red, cyan, magenta, and black in ascending order)}\label{fig:err}
\end{figure}



%
%

\bibliography{ACC13}
\bibliographystyle{IEEEtran}

\end{document}